\newtheorem{theorem}{Theorem}
\newtheorem{corollary}[theorem]{Corollary}
\newtheorem{definition}[theorem]{Definition}
\newtheorem{lemma}[theorem]{Lemma}
\newtheorem*{corollary2}{Corollary \ref{firstcorollary}}
\newtheorem*{GeomSuperRigidity}{The Geometric Superrigidity Theorem}
\newtheorem*{TopSuperRigidity}{The topological superrigidity theorem}
\newtheorem*{PDtheorem}{Theorem \ref{PD}}
\renewcommand{\:}{\mathord{:}\hspace*{1ex plus .25ex minus .3ex}}
\newcommand{\field}{\mathbb{F}_2}
\renewcommand{\hom}{\operatorname{Hom}}
\DeclareMathOperator{\CAT}{CAT}
\newcommand{\Z}{\mathbb{Z}}
\newcommand{\cohom}[3]{\operatorname{H}^{#1}({#2}, {#3})}
\newcommand{\ext}[4]{\operatorname{Ext}^{#1}_{#2}({#3}, {#4})}
\renewcommand{\:}{\mathord{:}\hspace*{1ex plus .25ex minus .3ex}}
\def\coind{\textrm{Coind}}
\def\ind{\textrm{Ind}}
\def\hom{\textrm{Hom}}
\begin{document}
\title{A topological splitting theorem for Poincar\'e duality groups and high dimensional manifolds}
\author{Aditi Kar \and Graham A. Niblo}
\address{Mathematical Institute, University of Oxford, 24-29 St Giles', Oxford OX1 3BL, England.}
\address{School of Mathematics, University of Southampton, Highfield, Southampton, SO17 1BJ, England.}
\email{Aditi.Kar@maths.ox.ac.uk}
\email{G.A.Niblo@soton.ac.uk}
\thanks{The first author is by EPSRC grant EP/I020276/1 and the second author is partially funded by a senior Leverhulme Fellowship.}

\begin{abstract}
We show that for a wide class of manifold pairs $N,M$ satisfying $\dim(M)=\dim(N)+1$, every $\pi_1$-injective map $f:N\rightarrow M$ factorises up to homotopy as a finite cover of an embedding. This result, in the spirit of Waldhausen's torus theorem, is derived using Cappell's surgery methods from a new algebraic splitting theorem for Poincar\'e duality groups. As an application we derive a new obstruction to the existence of $\pi_1$-injective maps.
\end{abstract}
\maketitle

\section{Introduction}
\label{intro}
\textit{\textbf{Conventions}: We use superscripts to denote real dimension e.g. a manifold denoted $N^{k}$ has dimension $k$. Once the dimension is established we omit the superscript so that the manifold $N^{k}$ is also denoted $N$.  A group $G$ is said to \emph{split over a subgroup $H$} if $G$ has one of the following descriptions:\\
\noindent Case I (Free product with amalgamation) $G=G_1 *_H G_2$ with $G_1\not = H\not = G_2$, \\
\noindent Case II (HNN extension) $G=J*_H$, $J \neq H$.}
\medskip

The presence of a group action on a space often allows one to promote an existing structure to another of a more strictly controlled kind. Examples of this phenomenon in topology include Papakyriokopoulos's sphere theorem \cite{Papakyriokopoulos}, Waldhausen's torus theorem \cite{Waldhausen}, and the geometric superrigidity theorem of Mok {\it et al}, \cite{Mok}. In this paper we propose the following topological result which has features in common with them all. 

\begin{theorem} \label{main} Let $N^{n}$ be a closed, orientable, aspherical topological manifold with $n$ even and $n\geq 6$, satisfying the following properties:
\begin{enumerate}
\item\label{Fbox} every cellular action of $\pi_1(N)$ on a CAT(0) cubical complex has a global fixed point,
\item\label{K_0} the projective class group $\widetilde{K_0}(C)$ vanishes for any torsion free finite extension $C$  of $\pi_1(N)$.
\end{enumerate}

\noindent Given any closed, orientable, aspherical topological manifold $M^{n+1}$ and  any $\pi_1$-injective, continuous function $j: N \rightarrow  M$,
there is a diagram as follows which commutes up to homotopy

\begin{figure}[h]
\label{diagram1}
\begin{center}
$\xymatrix{N_0\ar@{-->}_p[d] &N \ar@{->}_j[dd] \ar@{<-->}[l]_{{h'}} \\ N'  \ar@{-->}_i[d] &   \\M' \ar@{<-->}[r]^{h} & M }$
\end{center}
\end{figure}

\noindent where:\begin{enumerate}
\item  $M', N_0$ are closed, orientable, aspherical, topological  manifolds and $h, h'$ are  homotopy equivalences,
\item The map $p:N_0\rightarrow N'$ is a finite degree cover, and
\item $i:N'\rightarrow M'$ is a two sided topological embedding. 
\end{enumerate}
\end{theorem}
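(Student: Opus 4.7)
The overall approach is to split the argument into an algebraic part — producing a splitting of $G := \pi_1(M)$ over a subgroup commensurable with $H := \pi_1(N)$ — followed by a topological part in which this algebraic splitting is realized as a geometric decomposition of $M$ along a codimension-one submanifold, via Cappell's surgery-theoretic splitting theorem. The hypothesis $\dim(M) = \dim(N)+1$ together with Poincar\'e duality implies that $H$ is a $\mathrm{PD}_n$ subgroup of the $\mathrm{PD}_{n+1}$ group $G$, and in particular a codimension-one subgroup in the group-theoretic sense.

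For the algebraic step, I would exploit Sageev's dual cube complex construction. A codimension-one subgroup $H \leq G$ gives rise canonically to an essential cellular action of $G$ on a $\CAT(0)$ cube complex $X$ in which hyperplane stabilizers are commensurable with $H$. Restricting to $H$ produces a cellular action of $H$ on $X$, and hypothesis (\ref{Fbox}) supplies a global fixed point $v \in X$. Its stabilizer $C := \mathrm{Stab}_G(v)$ contains $H$, and commensurator arguments in the structure theory of codimension-one PD subgroups force $[C:H]$ to be finite. One then extracts a $G$-action on a simplicial tree from the $G$-action on $X$ — for instance by an appropriate collapse or by tracking a single hyperplane orbit — and Bass--Serre theory yields the desired splitting of $G$ as $A *_C B$ or $A *_C$, over the subgroup $C$. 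This is presumably the new algebraic splitting theorem for Poincar\'e duality groups referred to in the abstract.

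With the algebraic splitting in hand, I invoke Cappell's splitting theorem. In sufficiently high dimension (assured by $n \geq 6$), and provided that the relevant $\widetilde{K}_0$ and $\mathrm{UNil}$ obstructions vanish, Cappell converts an algebraic splitting of $\pi_1(M)$ over a finitely presented subgroup $C$ into a homotopy equivalence $h : M' \to M$ from a closed manifold $M'$ split along a two-sided codimension-one submanifold $N' \subset M'$ with $\pi_1(N') = C$. Hypothesis (\ref{K_0}) is precisely what is needed to control the $\widetilde{K}_0$-contribution, applied to $C$, which is a torsion-free finite extension of $H$. Vanishing of the relevant $\mathrm{UNil}$ term should follow from the same hypothesis combined with the even dimension of $N$ (making the splitting orientation-compatible in Cappell's sense). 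Asphericity and orientability of $M'$ and $N'$ follow from those of $M$ and $N$, together with the fact that $\pi_1(N') = C$ is itself a $\mathrm{PD}_n$ group.

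To complete the diagram, let $p : N_0 \to N'$ be the finite cover corresponding to the finite-index inclusion $H \leq C$. Then $N_0$ is a closed orientable aspherical manifold with $\pi_1(N_0) = H = \pi_1(N)$, so by the classical fact that aspherical CW complexes are classified up to homotopy by their fundamental groups there is a homotopy equivalence $h' : N \to N_0$. Tracking through the construction, $h \circ i \circ p \circ h'$ is homotopic to $j$, giving the required diagram. The main obstacle is the algebraic splitting step, particularly verifying that the subgroup $C$ produced by Sageev's construction together with hypothesis (\ref{Fbox}) indeed contains $H$ with finite index; a secondary technical point is establishing the vanishing of the $\mathrm{UNil}$ contribution to Cappell's obstruction from the $\widetilde{K}_0$ hypothesis.
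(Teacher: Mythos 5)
Your two-stage strategy — algebraic splitting of $\pi_1(M)$ followed by Cappell surgery — is the paper's strategy, and you correctly identify the role of hypothesis (\ref{K_0}), the finite cover $p:N_0\to N'$, and the classification of aspherical complexes by their fundamental groups. But there are three substantive gaps.

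First, and most seriously, the mechanism you propose for producing the splitting is wrong. You set $C := \mathrm{Stab}_G(v)$ where $v$ is an $H$-fixed vertex of Sageev's cube complex $X$ and assert that $G$ splits over this $C$. But the stabilizer of a vertex of $X$ is not the edge group of any natural splitting; it need not even be related to $H$ in a controlled way, and there is no general principle letting you ``collapse'' an essential cube complex action to a tree action. The paper does something quite different: Sageev's lemma converts the $H'$-fixed vertex into a proper $H'$-almost-invariant subset $B$ of $G$ with $H'BH'=B$; one then studies the singularity obstruction $S_B(G,H')$, and a cohomological-dimension argument (using the Claim in the paper plus Strebel's theorem) shows that $S_B(G,H')$ lies inside $\mathrm{Comm}_G(H')$. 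Only then does Niblo's generalized Stallings theorem (Theorem B of \cite{N}) produce a splitting, and the edge group $C$ it provides is merely \emph{commensurable} with $H$, not automatically containing $H$. The containment $H<C$ with finite index takes a further argument: pass to the Bass--Serre tree, arrange $H<A$, invoke the Dicks--Dunwoody $PD^n$-pair $(A,\Omega)$, and split into general type, trivial $I$-bundle type, and twisted $I$-bundle type, the last requiring an orientability observation. Your phrase ``commensurator arguments in the structure theory of codimension-one PD subgroups force $[C:H]$ to be finite'' waves at this but is not an argument; this case analysis is where the self-commensurating property and orientability of $H$ actually enter.

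Second, the first step — that $H$ has $\tilde{e}(G,H)\geq 2$, so that Sageev's construction applies at all — is not automatic from $H$ being a codimension-one $PD$ subgroup. It is a small Poincar\'e-duality/Eckmann--Shapiro computation ($\tilde e(G,H)=1+\dim_{\field}H^1(G,\mathcal F_H(G))=2$), and one must also pass to a finite-index $H'\leq H$ with $e(G,H')=2$ via Kropholler--Roller before Sageev applies to the \emph{geometric} end invariant. You skip both of these points.

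Third, your claim that the $\mathrm{UNil}$ obstruction vanishes ``from the same hypothesis combined with the even dimension of $N$'' is not correct, and it is also not needed. Neither $\widetilde K_0(C)=0$ nor evenness of $n$ forces $\mathrm{UNil}=0$; Cappell kills $\mathrm{UNil}$ via the square-root-closed hypothesis, which the theorem deliberately avoids. The reason the conclusion is only a homotopy equivalence $h:M'\to M$ rather than a homeomorphism is precisely that $\mathrm{UNil}$ is allowed to be nontrivial: Cappell's Nilpotent Normal Cobordism Construction produces a cobordant manifold $M'$, homotopy equivalent to $M$ and split along $N'$, without asserting $M'\cong M$. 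The hypothesis $n$ even is used so that the middle-dimensional surgery step (Cappell's Lemma~II.1) applies with the $\widetilde K_0$ condition controlling $[P]$; the odd case needs different surgery technology, not a weaker $\mathrm{UNil}$ hypothesis. You also need the analogue of the paper's Lemma~\ref{aspherical} to upgrade $k$-connectivity of $N'\to X_C$ to full asphericity of $N'$, which you gesture at but do not supply.
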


The reduced projective class group is conjectured to vanish for all torsion free groups (see \cite[Conjecture H.~1.~8] {Davis} for a discussion) so Condition (2) of  Theorem is conjecturally unnecessary. Condition (1) on the other hand is at the heart of the result. By Sageev's duality theorem \cite{Sageev} together with Scott's ends theorem \cite{Scott} it is equivalent to the assertion that no covering space of $N$ has more than 1 topological end. This is central to the proof which proceeds in 2 stages, first to derive a splitting of the fundamental group of $M$ as an HNN extension or as a non-trivial amalgamated free product and then to use the splitting to obtain the required decomposition of $M$. The first stage is essentially group theoretic and requires condition 1, while the second uses surgery theoretic techniques which necessitates the restriction to dimensions $\geq 5$. The restriction to even dimensions is unnecessary; however, the surgery methods required for the odd dimensional case are more 
complicated and we address them in a forthcoming paper. 

Subject to the Borel conjecture, there is a more elegant formulation of Theorem \ref{main}. The Borel conjecture, formulated by Armand Borel in 1953 asserts that any homotopy equivalence between  aspherical manifolds should be homotopic to a homeomorphism. Since the homotopy type and the dimension of  an aspherical manifold is determined by its fundamental group the Borel conjecture asserts that the fundamental group is sufficient to classify aspherical manifolds of any given dimension.

If in Theorem \ref{main}, both $\pi_1(M)$ and $\pi_1(N)$ are known to satisfy the Borel conjecture, or, alternatively,  if $\pi_1(N)$ satisfies the Borel conjecture and is square root closed in $\pi_1(M)$, then the homotopy equivalences $h,h'$  are homotopic to homeomorphisms and  the commutative diagram simplifies as below. In this case the theorem asserts that every $\pi_1$-injective codimension-$1$ map $j:N\rightarrow M$ factors, up to homotopy, as a finite cover of a $2$-sided embedding.
 
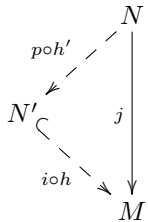
\begin{figure}[h]
\label{diagram2}
\begin{center}
$\xymatrix{&N \ar@{->}_j[dd] \ar@{-->}[ld]_{p\circ h'} \\ N' \ar@{^{(}-->}[rd]_{i\circ h} & \\ &M }$
\end{center}
\caption{When $\pi_1(M)$ and $\pi_1(N)$ satisfy the Borel conjecture the map $j$ factors up to homotopy as a finite cover, $p\circ h'$,  of an embedding $i\circ h$.}
\end{figure}

\label{superrigid} The hypotheses on $N$ are rigidity constraints which, for example, are satisfied when $N$ is a closed, orientable, Riemannian manifold whose universal cover $\tilde N$ is quaternionic hyperbolic or the Cayley hyperbolic plane. More generally, by \cite{NibloReeves}, condition \ref{Fbox} is satisfied whenever $\pi_1(N)$ satisfies Kazhdan's property (T). By results of Bartels and L\"uck \cite{BL}, condition \ref{K_0} is satisfied whenever $\pi_1(N)$ is a word hyperbolic group. Indeed, any finite extension of a hyperbolic group is itself, hyperbolic and so \cite{BL} applies. For the following reasons condition \ref{K_0} is also satisfied when $\pi_1(N)$ is CAT(0), i.e. \emph{it admits a co-compact isometric proper action on a finite dimensional CAT(0) space}.  

Assume that $H=\pi_1(N)$ admits a co-compact, proper isometric action on a CAT(0) space $X$ and that  $C$ is a finite extension of $H$. Letting $K$ denote the intersection of the conjugates of $H$ in $C$ we exhibit $C$ as a finite extension 

$$1 \rightarrow K \rightarrow C \rightarrow Q \rightarrow 1.$$

\noindent By the universal embedding theorem \cite[Theorem 2.6A]{dixonmortimer}, $C$ embeds in the standard wreath product $K \wr Q$ which acts cocompactly and properly discontinuously by isometries on the  the $Q$-fold direct product $X^Q$ of $X$ equipped with the induced CAT(0) metric. Hence, by \cite{BL} the Farrell-Jones Conjecture holds for the wreath product $K\wr Q$. However, as the property of a group satisfying the Farrell-Jones conjecture is preserved under taking subgroups, we conclude that $C$ satisfies the Farrell-Jones conjecture. Consequently the projective class group vanishes for every torsion-free finite extension of $\pi_1(N)$. 	

Hence we obtain as a corollary:
 
\begin{TopSuperRigidity}\label{superigid} Let $M^{n+1}$ and $N^n$ be closed, aspherical, orientable topological manifolds with $n$ even and $n\geq 6$ such that  $\pi_1(N)$ is either a word-hyperbolic or a CAT(0) group that satisfies Kazhdan's property (T).  If $M$ satisfies the Borel conjecture then every $\pi_1$-injective map $j:N\rightarrow M$ is, a finite cover of an embedding (up to homotopy). 
\end{TopSuperRigidity}

We regard this as a topological counterpart to the celebrated Geometric Superrigidity theorem:

\begin{GeomSuperRigidity}[Ngaiming Mok, Yum-Tong Siu, Sai-Kee Yeung, \cite{Mok}]\label{superrigidity}
Let $\tilde N$ be a globally symmetric irreducible Riemann manifold of non-compact type. Assume that either $\tilde N$ is of rank at least $2$, or  is the quaternionic hyperbolic space of dimension at least $8$ or the hyperbolic Cayley plane. Let $H$ be a cocompact discrete subgroup of the group of isometries of $\tilde N$ acting freely. Let $\tilde M$ be a Riemann manifold. Let $f$ be a non-constant $H$-equivariant harmonic map from $\tilde N$ to $\tilde M$. When the rank of $\tilde N$ is at least $2$, the Riemann sectional curvature is assumed to be non-positive. When the rank of $\tilde N$ is one, the complexified sectional curvature is assumed to be nonpositive. Then the covariant derivative of the differential of $f$ is identically zero. As a consequence, $f$ is a totally geodesic isometric embedding (up to a renormalization constant).
\end{GeomSuperRigidity}

To compare the two results note that the map $j:N\rightarrow M$ induces a $\pi_1(N)$ -equivariant map between the universal covers. The constraint on the curvature of the target $\tilde{M}$ in the geometric super rigidity theorem is dropped (together with the requirement that the target is smooth) in favour of a statement that $\tilde{M}$ is contractible and $\pi_1(M)$ satisfies the Borel conjecture. Instead of a harmonic map, we start with a continuous function which is  codimension-1. The conclusion that the map is  totally geodesic up to renormalisation is replaced by the conclusion that it is a finite cover of an embedding up to homotopy. Deforming the map $j$ to a harmonic map in the same homotopy class allows us to combine the conclusions of the two  results to see that the embedding provided by Theorem \ref{main} is homotopic to a totally geodesic surface in $M$.

The reader may find it helpful in visualising the results in this paper to consider the analogous statements in lower dimensions. First we consider the case of $\pi_1$-injective loops on a $2$-torus $T$. The fundamental group $\pi_1(T)$ is free abelian of rank $2$ and so any homotopy class of curves is represented by a pair of integers $(m,n)$. It is an elementary fact that a non-trivial curve is homotopic to a simple closed curve if and only if the pair $(m,n)$ is coprime, and it follows that in general a curve representing a pair $(a,b)$ is homotopic to a degree $d$ cover of the embedded curve representing the pair $(a/d, b/d)$ where $d=\gcd(a,b)$. It follows that every closed $\pi_1$-injective curve is homotopic to a finite cover of an embedded loop.  This is a direct analogue of the codimension-$1$ topological rigidity theorem.

Now, in contrast, consider the case of loops on a \emph{hyperbolic} surface. Every orientable hyperbolic surface $\Sigma$ admits $\pi_1$-injective maps $\gamma:S^1 \rightarrow \Sigma$ which do not factorise up to homotopy as a finite cover of an embedding. Recall that the free homotopy class of a closed loop contains a unique geodesic, and that this minimises the self intersection number for curves in that class. On the other hand, intersection numbers are multiplicative on powers, and it follows that any self intersecting closed geodesic which has intersection number $1$ with some simple closed curve provides a loop which does not finitely cover an embedded loop so the analogous statement fails in this case. Such surfaces do carry many splitting curves, and these can often be obtained from immersed totally geodesic curves by the somewhat different methods of cut and paste.

In dimension $3$ the situation worsens: the Kahn and Markovic theorem, \cite{KahnMarkovic}, shows that every hyperbolic $3$-manifold contains an immersed $\pi_1$-injective surface, however, there are examples which do not contain any embedded $\pi_1$-injective surfaces which they could cover. For example, in \cite{Reid}, Reid constructs a non-Haken manifold $M^3$ which admits a finite cover homeomorphic to a hyperbolic surface bundle over $S^{1}$. The fibre in the finite cover yields an immersed surface in $M$ but there are no embedded surfaces in $M$ which it could cover so there is in general no  analogue of the codimension-$1$ topological rigidity theorem in low dimensions. This leaves open the question of what happens in dimension $n=4$.

\bigskip

The strategy in the proof of Theorem \ref{main} is to first establish the existence of a splitting of the fundamental group $\pi_1(M)$ over a suitable subgroup using methods from geometric group theory. This result applies in the context of Poincar\'e duality groups and is of independent interest.

\begin{theorem}\label{PD}
Let $G$ be an orientable $PD^{n}$ group and $H$ be an orientable $PD^{n-1}$ subgroup of $G$. If every action of $H$ on a CAT(0) cube complex has a global fixed point (in particular, by \cite{NibloReeves},  if $H$ satisfies Kazhdan's property (T)), then $G$ splits over a subgroup $C$ containing $H$ as a finite index subgroup. 
\end{theorem}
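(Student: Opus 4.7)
The plan is to combine a Poincar\'e-duality computation of relative ends with Sageev's dual CAT(0) cube complex construction. The first step is to verify that $H$ is a \emph{codimension-one} subgroup of $G$, i.e.\ that the number of relative ends $\tilde e(G,H)$ is at least $2$. This is a standard application of Bieri--Eckmann duality to the orientable PD pair $(G,H)$ in dimensions $(n,n-1)$: comparing dualising modules in the two dimensions shows that $H^{1}(G,\mathbb{Z}[G/H])$ is nontrivial, which is equivalent to $\tilde e(G,H)\geq 2$. Orientability of both groups is essential here, as it pins down the signs in the comparison map.

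With codimension-one in hand, I would run Sageev's construction on the almost-invariant set associated to $H$ to produce a CAT(0) cube complex $X$ on which $G$ acts essentially by cubical automorphisms. The walls of $X$ form a single $G$-orbit of a distinguished wall $W$ whose setwise stabiliser $C:=\mathrm{Stab}_{G}(W)$ contains $H$ and is commensurable with it; since $H\subseteq C$ and $H$ has finite index in $H\cap C=H$, we obtain $[C:H]<\infty$ as required by the theorem.

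The key leverage then comes from the fixed-point hypothesis on $H$: its induced action on $X$ must fix a global vertex $v\in X$. Since a vertex of $X$ is an $H$-invariant consistent orientation of all the walls of $X$, the orientation singled out by $v$ descends to a $G$-equivariant map from $X$ onto a simplicial tree $T$ dual to the orbit $G\cdot W$, with edges the translates of $W$ and vertices the $H$-coherent ``sides''. Applying Bass--Serre theory to the resulting $G$-action on $T$ then yields the desired decomposition of $G$ as an HNN extension or a non-trivial amalgamated product over a conjugate of $C$.

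The hard part, I expect, is verifying non-triviality of the splitting extracted from $T$: one must rule out that $G$ itself fixes a vertex of $T$ (which would force $G=C$) and that edge stabilisers coincide with vertex stabilisers on either side. This should follow by combining the essentialness of Sageev's action, which ensures that both halfspaces bounded by $W$ contain deep $G$-translates, with the lower bound $\tilde e(G,H)\geq 2$ coming from Poincar\'e duality, which prevents the $G$-orbit of $W$ from collapsing. A subsidiary technical point is ensuring two-sidedness of $W$ in $T$ so that the splitting is honestly over $C$ rather than over an index-two subgroup; orientability of the pair $(G,H)$ again enters at this stage.
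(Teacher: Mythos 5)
Your opening computation is on the right track and matches the paper's in spirit: Bieri--Eckmann duality for the pair $(G,H)$ in dimensions $(n,n-1)$, using orientability, does give $\tilde e(G,H)\geq 2$. (The paper works over $\mathbb{F}_2$ rather than $\mathbb{Z}$ and then passes, via a lemma of Kropholler--Roller, from $\tilde e(G,H)=2$ to a finite-index subgroup $H'\leq H$ with $e(G,H')=2$ before feeding this into Sageev's construction; this index-$\leq 2$ shift is a detail, but it matters for bookkeeping.)

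The central gap is in the third paragraph. A fixed vertex of $H$ in Sageev's cube complex $X$ is indeed an $H$-coherent orientation of all walls, but it does \emph{not} yield a $G$-equivariant quotient of $X$ onto a tree dual to $G\cdot W$. Distinct translates of $W$ may cross in $X$, and when they do there simply is no tree whose edge set is $G\cdot W$. This crossing phenomenon is exactly what the paper has to overcome, and it is where nearly all the real work of the argument lies; your proposal quietly assumes it away. Concretely, the paper applies the fixed-point hypothesis not to $H$ acting on $X$, but to $H'$ acting on the \emph{hyperplane} $J$ (itself a CAT(0) cube complex of lower dimension), extracting via Sageev's Lemma 2.5 a proper $H'$-almost-invariant set $B$ with the two-sided invariance $H'BH'=B$. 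One then considers Niblo's singularity obstruction $S_B(G,H')$ -- the set of $g\in G$ for which $gB$ and $B$ genuinely ``cross'' -- and shows, using the PD structure, Strebel's theorem, and a cohomological-dimension computation, that every element of $S_B(G,H')$ lies in $\mathrm{Comm}_G(H')$. Only then does the generalised Stallings theorem (Theorem B of Niblo's singularity paper) produce a splitting of $G$ over a subgroup commensurable with $H'$. None of this is replaced by the fixed-vertex-in-$X$ observation.

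Two smaller but real gaps: first, you assert that the wall stabiliser $C=\mathrm{Stab}_G(W)$ contains $H$, but Sageev's construction gives $H'$ (a subgroup of $H$) as the hyperplane stabiliser, and there is no a priori reason for the stabiliser to enlarge to contain $H$. Second, the conclusion $H\leq C$ (rather than merely $H$ commensurable with $C$) is not automatic from the splitting; the paper devotes a separate argument to it, normalising the splitting via Bass--Serre theory and then invoking the theory of $PD^n$ pairs (Dicks--Dunwoody) together with a case analysis on whether $(A,\Omega)$ is of general type, $I$-bundle type, or twisted $I$-bundle type, with an orientability observation due to Cornick--Kropholler handling the twisted case. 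Your final paragraph correctly anticipates that there is a delicate issue at the end, but identifies it as non-triviality of the splitting, whereas the actual difficulty is the containment $H\leq C$.
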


Theorem \ref{PD} may be viewed as an analogue of the algebraic annulus and torus theorems of Kropholler and Roller \cite{KrophollerRoller}, Dunwoody and Swenson \cite{Dunwoody-Swenson}. In a companion paper we give a group theoretic application of Theorem \ref{PD}, obtaining a canonical decomposition of Poincar\'e duality groups over codimension-1 property (T) Poincar\'e duality subgroups. This may be viewed as an analogue of the algebraic JSJ decomposition studied by Kropholler \cite{Kropholler}, Dunwoody and Sageev \cite{Dunwoody-Sageev}, Scott and Swarup \cite{Scott-Swarup} and others.  

While Theorem \ref{main} is stated in the topological category the surgery technology applied in this paper also works in the smooth category and if the map $j$ is smooth then  the map $i$ it constructs is smooth also. We will use this fact to deduce the following obstruction result from Theorem \ref{main}.

\begin{corollary}\label{firstcorollary}
Let $M^{4d+1}$ be a closed, orientable, aspherical, smooth manifold  such that $d\geq2$ and the first Betti number $b_1(M)$ is zero. Let $N^{4d}$ be a closed, orientable, aspherical, smooth manifold with at least one non-zero Pontryagin number such that $\pi_1(N)$ is either word hyperbolic or a CAT(0) group and satisfies Kazhdan's property (T). Then there are no $\pi_1$-injective continuous maps $f\: N \rightarrow M$.
\end{corollary}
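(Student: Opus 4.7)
The plan is to argue by contradiction using Theorem \ref{main}. Assume there is a $\pi_1$-injective continuous map $f\: N\to M$; by standard smooth approximation we may take $f$ smooth (this preserves $\pi_1$-injectivity up to homotopy). The dimension $n=4d$ is even and at least $8$, so Theorem \ref{main} will apply once its two conditions are verified for $N$: property (T) together with \cite{NibloReeves} gives condition \ref{Fbox}, while the hypothesis that $\pi_1(N)$ is hyperbolic or CAT(0) with property (T) forces every torsion-free finite extension of $\pi_1(N)$ to satisfy the Farrell--Jones conjecture (by \cite{BL} in the hyperbolic case, and by the wreath-product argument recalled earlier in the excerpt in the CAT(0)+(T) case), from which condition \ref{K_0} follows. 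Invoking the remark that Cappell's surgery machinery also operates in the smooth category when the input is smooth, Theorem \ref{main} produces a smooth two-sided embedding $i\: N'\hookrightarrow M'$ of closed orientable aspherical smooth manifolds, a smooth finite cover $p\: N_0\to N'$, and homotopy equivalences $N_0\simeq N$ and $M'\simeq M$.

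The crux is transporting a non-vanishing Pontryagin number from $N$ to $N'$. Since $\pi_1(N)=\pi_1(N_0)$ satisfies the Farrell--Jones conjecture and the dimension is $\geq 5$, the Borel conjecture holds in this range, so the homotopy equivalence $N_0\simeq N$ is realised by a homeomorphism. By Novikov's theorem on the topological invariance of rational Pontryagin classes, $N_0$ and $N$ have the same rational Pontryagin numbers, so $N_0$ inherits the nonzero Pontryagin number assumed on $N$. Pontryagin numbers are multiplicative under finite covers, so if $p$ has degree $k$ then the corresponding Pontryagin number of $N'$ equals $1/k$ times that of $N_0$ and is therefore nonzero. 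In particular, $N'$ is not null-cobordant as a closed oriented smooth manifold.

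With $N'$ smoothly non-null-cobordant, the hypothesis $b_1(M)=0$ forces a contradiction through a separation dichotomy. The two-sided smooth embedding $i\: N'\hookrightarrow M'$ is either separating or not. If $N'$ separates, cutting along $N'$ writes $M'=W_1\cup_{N'}W_2$ with $\partial W_1=\partial W_2=N'$, so $N'$ bounds smoothly, contradicting Step~2. If $N'$ is non-separating, cutting along $N'$ produces a connected smooth cobordism whose two boundary components are identified in $M'$; equivalently, $\pi_1(M')$ splits as an HNN extension over $\pi_1(N')$, yielding a surjection $\pi_1(M')\twoheadrightarrow\Z$ that detects algebraic intersection number with $N'$. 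Hence $b_1(M')\geq 1$, but $b_1$ is a homotopy invariant and $M'\simeq M$, contradicting $b_1(M)=0$.

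The principal obstacle is the Pontryagin-class step, since Pontryagin numbers are not in general homotopy invariants: the argument hinges on the topological rigidity of aspherical manifolds with hyperbolic or CAT(0)+(T) fundamental group (via Bartels--L\"uck) combined with Novikov's theorem, which together allow one to transfer the invariant across the homotopy equivalence $N_0\simeq N$. The remaining ingredients---the application of Theorem \ref{main}, multiplicativity of Pontryagin numbers under finite covers, and the $b_1$-based separation dichotomy---are standard.
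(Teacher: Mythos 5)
Your argument matches the paper's proof: apply Theorem \ref{main} in the smooth category, transfer the nonzero Pontryagin number from $N$ to $N'$ through the (Borel-rigidified) covering $N\to N'$, then derive a contradiction from the separating/non-separating dichotomy---orientable null-cobordism killing Pontryagin numbers in the first case, $b_1(M)>0$ in the second. Your explicit appeal to Novikov's topological invariance of rational Pontryagin classes to bridge the homeomorphism $N\cong N_0$ before invoking multiplicativity under the smooth cover $p\: N_0\to N'$ is a welcome clarification of a step the paper leaves implicit.
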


The proof of Theorem \ref {main} relies on ideas from geometric group theory, surgery theory, homological algebra and rigidity theory and the paper is organised as follows. In Section \ref{prelim1} we set up and prove the algebraic splitting theorem, Theorem \ref{PD}, for Poincar\'e duality groups. In section \ref{prelim2} we outline  the topological ingredients required for the proof of the topological splitting theorem, Theorem \ref{main}. In Section \ref{proof} we present the proof of Theorem \ref{main}, and in section \ref{application} we give the proof of the obstruction result, Corollary \ref{firstcorollary}.

\section{Splitting Poincar\'e duality groups} \label{prelim1}
In order to set up notation we will first define Poincar\'e duality groups. For further details, we refer the reader to \cite{bieri} and references therein. 

\begin{definition} 
Let $R$ be a commutative ring with unity. A group $G$ is said to be a duality group of dimension $n$ over $R$ if there is an $RG$ module $D_G$ such that for all $k \in \mathbb{Z}$ and for all $RG$ modules $L$, one has the following natural isomorphisms (referred to as duality isomorphisms). $$H^k(G;L) \cong H_{n-k} (G; D_G \otimes_R L)$$ 
\end{definition}

Here, $G$ acts diagonally on the tensor product. The module $D_G$ is called the dualising module of $G$. It is clear that the cohomological dimension over $R$ of such a group is at most $n$. Moreover, taking the module $L$ to be the induced module $RG$ in the duality isomorphism and applying the Eckmann-Shapiro Lemma, we find that $H^k(G,RG)= 0$ for all $k\neq n$ and $H^n(G,RG)= D_G$. This implies that $G$ is a group of type FP over $R$; however, much more is true as the following Theorem shows. 

\begin{theorem}\cite[Theorem 9.2]{bieri} A group $G$ is a duality group of dimension $n$ over $R$ if and only if the following three conditions hold. 
\begin{enumerate}
	\item $G$ is of type FP over $R$
	\item $H^k(G, RG)=0$ for $k\neq n$
	\item $H^n(G,RG)$ is flat as an $R$-module. 
\end{enumerate}
\end{theorem}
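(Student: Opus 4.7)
\emph{Forward direction.} Assume $G$ is a duality group of dimension $n$. Setting $L=RG$ in the duality isomorphism and invoking Eckmann--Shapiro, as outlined in the paragraph preceding the theorem, yields both (2) and the identification $D_G\cong H^n(G,RG)$. Duality also forces $\mathrm{cd}_RG\leq n$, and its naturality applied to direct sums shows that cohomology commutes with direct sums, so $G$ is $\mathrm{FP}_\infty$; combined with the dimension bound this gives (1). Condition (3) follows by evaluating the duality on modules of the form $RG\otimes_R L$: Eckmann--Shapiro identifies $H^n(G,RG\otimes_R L)$ with $D_G\otimes_R L$, and the exactness of $H^n(G,-)$ on such modules (ensured by the vanishing of $H^{n+1}$) translates into the exactness of the functor $D_G\otimes_R -$.

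\emph{Converse direction.} Assuming (1)--(3), use (1) to choose a resolution $P_\bullet\to R$ by finitely generated projective $RG$-modules; standard arguments combined with (2) allow us to take its length to be $n$. The dual cochain complex $P^\bullet:=\hom_{RG}(P_\bullet,RG)$ computes $H^*(G,RG)$, so by (2) and (3) its cohomology is concentrated in degree $n$ with value $D_G$, yielding an exact sequence
$$0\to P^0\to P^1\to \cdots \to P^n \to D_G\to 0$$
which is a finite projective resolution of $D_G$ by finitely generated projective right $RG$-modules. For an arbitrary left $RG$-module $L$, the natural map $\hom_{RG}(P_k,RG)\otimes_{RG}L\to \hom_{RG}(P_k,L)$ is an isomorphism because each $P_k$ is finitely generated projective, and thus
$$H^k(G,L)\cong \mathrm{Tor}^{RG}_{n-k}(D_G,L).$$

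\emph{Main obstacle.} What remains, and is the technical heart of the proof, is the natural identification
$$\mathrm{Tor}^{RG}_{n-k}(D_G,L)\cong \mathrm{Tor}^{RG}_{n-k}(R,\, D_G\otimes_R L)=H_{n-k}(G,\, D_G\otimes_R L).$$
In degree zero this reduces to the elementary isomorphism $D_G\otimes_{RG}L\cong R\otimes_{RG}(D_G\otimes_R L)$ via a diagonal-action calculation. To extend the identification to higher degrees I would combine the projective resolution of $R$ with the resolution of $D_G$ constructed above into a double complex, tensor over $RG$ with $L$, and compare its two spectral sequences. Condition (3) is indispensable here: flatness of $D_G$ over $R$ makes $D_G\otimes_R -$ exact, which collapses one spectral sequence onto $\mathrm{Tor}^{RG}_*(R, D_G\otimes_R L)$, while projectivity of $P^\bullet$ over $RG$ collapses the other onto $\mathrm{Tor}^{RG}_*(D_G,L)$. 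Without (3) neither collapse would hold, which is precisely why flatness of $D_G$ must appear among the characterising conditions.
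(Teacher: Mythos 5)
The paper gives no proof of this statement---it is quoted from Bieri's notes---so I can only assess your argument on its own terms; it does follow the standard route. The architecture is right: specialise to $L=RG$ to get (2) and $D_G\cong H^n(G,RG)$; deduce type FP from the fact that duality expresses cohomology through colimit-preserving functors; in the converse, dualise a finite resolution by finitely generated projectives to get $H^k(G,L)\cong \mathrm{Tor}^{RG}_{n-k}(D_G,L)$, and then untwist to $H_{n-k}(G,D_G\otimes_R L)$. Your ``main obstacle'' sketch is a viable way to finish, but note what each collapse really needs: the collapse onto $\mathrm{Tor}^{RG}_*(D_G,L)$ uses that a projective $RG$-resolution $F_\bullet\to R$ is $R$-split, so $F_\bullet\otimes_R L\to L$ stays exact (this is not automatic and should be said); the collapse onto $H_*(G,D_G\otimes_R L)$ uses that the higher $\mathrm{Tor}^{RG}$ of $D_G$ against induced modules vanish, which follows from $R$-flatness of $D_G$ together with $R$-projectivity of the terms of the dualised complex. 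A slightly cleaner finish avoiding spectral sequences: because $D_G$ is $R$-flat, $D_G\otimes_R F_\bullet$ with diagonal action is a flat $RG$-resolution of $D_G$, and tensoring with $L$ and untwisting gives the identification directly.

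Two slips in the forward direction need repair. First, the finiteness argument: the Bieri--Eckmann criterion characterises FP$_n$ by $H^k(G,-)$ commuting with \emph{filtered colimits} (direct limits), not with direct sums; commuting with direct sums (``smallness'') is weaker and is not known to force finite generation over a general ring, so as written the deduction of (1) is unjustified. The fix is immediate: homology and $D_G\otimes_R-$ both commute with filtered colimits, hence so does $H^k(G,-)$ by naturality of the duality, whence $G$ is FP$_\infty$; combined with $\mathrm{cd}_R G\le n$ this gives FP. Second, for (3): right-exactness of $L\mapsto H^n(G,RG\otimes_R L)\cong D_G\otimes_R L$ is automatic for a tensor functor, and the vanishing of $H^{n+1}$ is not the point; what flatness requires is preservation of injections, and that comes from $H^{n-1}(G,RG\otimes_R L'')=0$, which duality supplies because induced modules have vanishing positive-degree homology. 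With these corrections your argument is sound and is essentially the standard proof.
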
 

\begin{definition} A group $G$ is called a Poincar\'e Duality group of dimension $n$ over $R$ (or a $PD^n(R)$-group, for short) if $G$ is a duality group of dimension $n$ over $R$ and the dualising module $D_G$ is isomorphic to $R$. 
\end{definition} 

When $G$ is the fundamental group of a closed aspherical manifold $M$, then Poincar\'e duality holds with $R=\Z$ and in this case the dualising module is a trivial $G$-module if and only if $M$ is orientable. We will follow the convention of writing $PD^n$ for $PD^n(\mathbb{Z})$. In this paper we will consider $PD^n$ groups and $PD^n(\field)$ groups. Observe that every $PD^n$ group is also a $PD^n(\field)$ group. 

A group is a one-dimensional duality group over $\mathbb{Z}$ if and only if it is finitely generated free. Consequently, a group $G$ is a $PD^1$ group if and only if $G \cong \mathbb{Z}$. That every $PD^2$ group is a surface group is a deep theorem due to Bieri, Eckmann, Linnell and Muller. It is conjectured that every finitely presented $PD^n$ group is the fundamental group of a closed aspherical manifold. 

The geometric and algebraic end invariants which play a crucial role in this paper may be defined in terms of the cohomology groups of a group with exotic coefficients. In order to define them we will need the following modules.

Let $\mathcal{P}G$ denote the collection of all subsets of $G$. Then, $\mathcal{P}G$ is an $\field$-vector space with respect to the operation of symmetric difference. One checks that $\mathcal{P}G$ is also a $G$ module. Moreover, $\mathcal{P}G \cong$ $\coind ^G _1 \field$. For  a subgroup $H<G$ we denote by $\mathcal{F}_H (G)$ the $\field G$-module $\ind ^G_H \mathcal{P}H$:

\[
\mathcal{F}_H (G)= \{A \subseteq G: A \subseteq HF \textrm{\ for some finite set F} \}.
\] 

Similarly the power set $\mathcal{P}(H\backslash G)$ of $H \backslash G$ and the collection of finite subsets of $H\backslash G$, written $\mathcal{F}(H
\backslash G)$ are $\field[G]$ modules. In fact, $\mathcal{P}(H\backslash G)$ $\cong \coind ^G _H \field$ and $\mathcal{F}(H \backslash G)$ $\cong \ind ^G _H \field$. 

We recall the following definitions from \cite{KrophollerRoller}:

\begin{definition}
For a subgroup $H<G$ the algebraic end invariant is defined as $$\tilde{e}(G,H):=\dim_{\field} \left(\mathcal{F}_H(G) \backslash \mathcal{P}G \right) ^G$$ and the geometric end invariant is defined as \[e(G,H)=\dim_{\field} \left(\mathcal{F}(H\backslash G)) \backslash \mathcal{P}(H\backslash G) \right)^G.\] 
\end{definition}

If $H$ has finite index in $G$, then coinduction coincides with induction and therefore $e(G,H)=0=\tilde{e}(G,H)$. If on the other hand, $H$ is of infinite index, then one can study the long exact sequence in cohomology corresponding to $0\rightarrow \mathcal{F}_H (G) \rightarrow \mathcal{P}G \rightarrow \mathcal{F}_H(G) \backslash \mathcal{P}G  \rightarrow 0$ and an easy computation yields the formula $\tilde{e}(G,H) = 1 + \dim_{\field} H^1(G, \mathcal{F}_H(G)).$

\medskip

We now have all the  notation required to state our splitting theorem for $PD^n$ groups. 

\begin{PDtheorem}\label{dualitytheorem}
Let $G$ be an orientable $PD^{n}$ group and $H$ be an orientable $PD^{n-1}$ subgroup of $G$. If every action of $H$ on a CAT(0) cube complex has a global fixed point, in particular, if $H$ satisfies Kazhdan's property (T), then $G$ splits over a Poincar\'e duality subgroup $C$ containing $H$ as a finite index subgroup. 
\end{PDtheorem}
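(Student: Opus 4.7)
The strategy is to detect a codimension-one structure on the pair $(G,H)$ via Poincar\'e duality, encode it as a $G$-action on a CAT(0) cube complex using Sageev's cubulation, and then exploit the fixed point property of $H$ to distill an honest Bass--Serre splitting.

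For the first step I would show that $\tilde{e}(G,H)\geq 2$. Using the formula $\tilde{e}(G,H)=1+\dim_{\field}H^{1}(G,\mathcal{F}_H(G))$ recorded above, it suffices to produce a non-zero class in $H^{1}(G,\mathcal{F}_H(G))$. Orientable $PD^n$-duality for $G$ converts this into $H_{n-1}(G,\mathcal{F}_H(G))$; Shapiro's lemma applied to $\mathcal{F}_H(G)=\ind_H^G\mathcal{P}H$ identifies it with $H_{n-1}(H,\mathcal{P}H)$; and orientable $PD^{n-1}$-duality on $H$ converts the latter into $H^{0}(H,\mathcal{P}H)=(\mathcal{P}H)^H=\field$. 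Hence $\tilde{e}(G,H)=2$, and we may fix a non-trivial $H$-almost invariant subset $A\subseteq G$. Feeding the $G$-orbit of the wall $W=\{A,A^c\}$ into Sageev's dual cubulation \cite{Sageev} produces a CAT(0) cube complex $X$ on which $G$ acts, with the stabilizer of the hyperplane dual to $W$ containing $H$ as a finite index subgroup. Since by hypothesis every action of $H$ on a CAT(0) cube complex has a global fixed point, $H$ stabilises a vertex $v\in X$; equivalently, $H$ selects a coherent side of every translate $gW$.

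The crux, which I expect to be the main obstacle, is to pass from the pair $(X,v)$ to an honest Bass--Serre splitting. My approach is to argue that the $H$-fixed vertex together with the Poincar\'e duality structure forces the $G$-translates of the chosen hyperplane to form a \emph{nested} family of walls: intuitively, $H$ already lies strictly on one side of each translate $gW$, and the calculation above shows $\dim_{\field}H^{1}(G,\mathcal{F}_H(G))=1$, which should leave no room for a genuinely transverse class. A nested $G$-invariant family of walls is dual to a genuine simplicial tree $T$ on which $G$ acts without inversions and whose edge stabilisers are the hyperplane stabilisers. Bass--Serre theory then yields a splitting of $G$ over $C:=\mathrm{Stab}_G(e)$ for a suitable edge $e$ of $T$; by construction $C$ contains $H$ with finite index. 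Ruling out transverse pairs $gW,g'W$ is the delicate point, for which I expect to invoke the structure-theoretic machinery for almost-invariant sets in Poincar\'e duality pairs in the spirit of \cite{KrophollerRoller} and \cite{Dunwoody-Swenson}.

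Finally, since $H$ is an orientable $PD^{n-1}$ group and $C$ is a torsion-free (as a subgroup of the torsion-free group $G$) finite extension of $H$, standard results on virtual duality groups identify $C$ as itself a $PD^{n-1}$ group, yielding the strengthened conclusion.
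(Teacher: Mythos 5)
Your opening computation is exactly the paper's: orientable duality for $G$, the Eckmann--Shapiro lemma, and duality for $H$ give $\tilde{e}(G,H)=2$, and Sageev's construction then enters. But the step you yourself flag as the crux --- getting from the cubulation to an actual splitting --- is precisely where your proposal stops being a proof, and the route you sketch (nestedness of the orbit of the wall) is not the one that works. The fact that $\dim_{\field}H^1(G,\mathcal{F}_H(G))=1$ does not ``leave no room for a transverse class'': already for a non-simple closed geodesic on a hyperbolic surface one has $\tilde{e}(G,H)=2$ while the associated almost invariant set crosses its translates and $G$ admits no splitting over a subgroup commensurable with $H$. (That $H$ there fails the fixed-point hypothesis is exactly the point: the fixed-point property has to do real work in the crossing analysis, and you have not said how.) Likewise, $H$ fixing a vertex of $X$ only says the corresponding ultrafilter is $H$-invariant; it does not put $H$ on one side of each translate of $W$, nor does it make the translates nested. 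The paper's mechanism is different: first pass, via \cite[Lemma 2.5]{KrophollerRoller}, to a subgroup $H'\le H$ of index at most $2$ with $e(G,H')=2$; run Sageev so that $H'$ is a hyperplane stabiliser; apply the fixed-point hypothesis to the action of $H'$ on the hyperplane $J$ (itself a $\CAT(0)$ cube complex) to extract an almost invariant set $B$ with the two-sided invariance $H'BH'=B$ \cite{Sageev}; then, instead of eliminating crossings, control them: a cohomological-dimension argument (duality plus Strebel's theorem, the ``Claim'' in the paper) shows that every element of the singularity obstruction $S_B(G,H')$ lies in $\mathrm{Comm}_G(H')$, and Theorem B of \cite{N} then yields a splitting of $G$ over a subgroup $C$ commensurable with $H'$. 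Deferring this to ``machinery in the spirit of \cite{KrophollerRoller} and \cite{Dunwoody-Swenson}'' leaves the central step of the theorem unproved.

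There is a second genuine gap at the end. What the splitting argument produces is only a subgroup $C$ \emph{commensurable} with $H$, and your assertion that the edge stabilisers contain $H$ with finite index ``by construction'' is unjustified (even the claim that the hyperplane stabiliser in Sageev's complex contains $H$ with finite index needs an argument, since the stabiliser of $B$ up to almost equality may a priori be larger). A substantial portion of the paper's proof is devoted to exactly this upgrade: a Bass--Serre tree argument to conjugate the picture so that $H\le A$ and $H$ is commensurable with the edge group $C$ itself, then the theory of $PD^{n}$ pairs via \cite[Theorem V.8.2]{dicksdunwoody}, with a case division into general type, trivial $I$-bundle type and twisted $I$-bundle type --- the last handled by the Cornick--Kropholler orientability observation together with \cite[Proposition VIII.10.2]{brown} --- to conclude that $H$ is actually a finite-index subgroup of $C$. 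Your closing remark that a torsion-free group commensurable with an orientable $PD^{n-1}$ group is again $PD^{n-1}$ is fine, but it does not substitute for this containment argument, which is where the orientability hypotheses on $G$ and $H$ are genuinely used.
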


\begin{proof} Let $G$ and $H$ be as in the statement of the theorem. As $PD^k$ groups are also $PD^k(\field)$ groups, we may work over $\field$. Since the dualising module $H^n(G, \field G) \cong \field$ is trivial the duality isomorphism gives  $H^k(G;\mathcal{F}_H(G)) \cong H_{n-k} (G; \mathcal{F}_H(G))$ for all $k \in \mathbb{Z}$. Therefore, $$H^1(G, \mathcal{F}_H(G)) \cong H_{n-1}(G,\mathcal{F}_H(G))\cong H_{n-1}(G, \textrm{Ind}^G_H (\mathcal{P}H)) \cong H_{n-1}(H, \mathcal{P}H),$$ where the last isomorphism is given by the classical Eckmann-Shapiro Lemma.
  
Using duality isomorphisms for $H$, we get $H_{n-1}(H, \mathcal{P}H )\cong H^0(H, \mathcal{P}H)\cong \field$. As $\tilde{e}(G,H) = 1 + \dim_{\field} H^1(G, \mathcal{F}_H(G))$, we deduce that $\tilde{e}(G,H)= 2$. We now invoke Lemma 2.5 of \cite{KrophollerRoller} to get a subgroup $H'$ of at most index $2$ in $H$ such that $e(G,H')$= $\tilde{e}(G,H)$=2. 

Applying Sageev's construction \cite[Theorem 2.3]{Sageev} we obtain a $\CAT(0)$ cube complex $X$ such that $G$ acts essentially on $X$ and $H'$ is the stabiliser of an oriented codimension-1 hyperplane $J$. As $H'$ has finite index in the group $H$  the action of $H'$ on the CAT(0) cube complex $J$ has a global fixed point. One now extracts from the fixed point of the action, a proper $H'$ almost invariant subset $B$ of $G$ such that $H'BH'=B$ \cite[Lemma 2.5]{Sageev}. 

The singularity obstruction $S_B(G,H')$, introduced in \cite{N}, is defined as the collection $\{g \in G\ :\ gB \cap B \neq \emptyset,\ gB^c\cap B \neq \emptyset,\ gB\cap B^c\neq \emptyset \textrm{ and } gB^c\cap B^c \neq \emptyset  \}$, where $B^c$ $=G\backslash B$. We now apply  \cite[Lemma 4.17] {Kropholler} as follows to deduce that  the subgroup $K_g=H'\cap gH'g^{-1}$ satisfies $\tilde{e}(G,K_g)\geq 2$. 

\begin{itemize}
\item[] When $g\in (B^*)^{-1}\cap B^*$ then $gB\cap B$ is a $K_g$-almost invariant subset.
\item[] When $g\in (B^*)^{-1}\cap B$ then $gB\cap B^*$ is a $K_g$-almost invariant subset.
\item[] When $g\in B^{-1}\cap B^*$ then $gB^*\cap B$ is a $K_g$-almost invariant subset.
\item[] When $g\in B^{-1}\cap B^*$ then $gB^*\cap B^*$ is a $K_g$-almost invariant subset.
\end{itemize}

We will use this to show that the elements of $S_B(G,H')$ all lie in the commensurator of $H'$, allowing us to apply the generalised Stallings' theorem from \cite{N}.

\noindent \textbf{Claim} For any $PD^n(\field)$ group $\Gamma$ with subgroup $\Gamma'$, if $\tilde{e}(\Gamma, \Gamma') \geq 2$ then $\textrm{cd}_{\field} \Gamma'$ $=n-1$. 

Let $\Gamma, \Gamma'$ be as in Claim. If $\tilde{e}(\Gamma,\Gamma') \geq 2$, then $\Gamma'$ has infinite index in $\Gamma$ and so by Strebel's theorem $\textrm{cd}_{\field} \Gamma' \leq n-1$. We will show that in fact we have equality.  Suppose not and $\textrm{cd}_{\field} \Gamma'$ $\leq n-2$. 

As before $\tilde{e}(\Gamma,\Gamma') \geq 2$ implies that $H^1(\Gamma, \textrm{Ind}^{\Gamma}_{\Gamma'}\mathcal{P}\Gamma')$ is non-zero. However, by duality for $\Gamma$ and the Eckmann-Shapiro Lemma, we have $H^1(\Gamma, \textrm{Ind}^{\Gamma}_{\Gamma'}\mathcal{P}\Gamma')$ $\cong$ $H_{n-1}(\Gamma', \mathcal{P}\Gamma')$. As $\textrm{cd}_{\field} \Gamma'$ $\leq n-2$, there is a projective resolution $P$ of $\field$ by $\field \Gamma'$-modules of length $n-2$. By definition, $H_{n-1}(\Gamma', \mathcal{P}\Gamma'))$ is the $(n-1)$-th homology of the complex $P \otimes_{\field\Gamma'} \mathcal{P}\Gamma'$. Clearly, the latter vanishes, contradicting the fact that $H^1(\Gamma, \textrm{Ind}^{\Gamma}_{\Gamma'}\mathcal{P}\Gamma')$ is non-zero. This proves the Claim. 

Returning to the proof of Theorem \ref{dualitytheorem} we deduce from the Claim that if $g \in S_B(G,H')$ then $K_g$ has the same cohomological dimension as $H'$ and as $H'^g$. It follows, again by Strebel's theorem that $K_g$ has finite index in both. Hence, $g$ lies in the commensurator $\textrm{Comm}_G(H')$ of $H'$ as required. Therefore by Theorem B of \cite{N}, $G$ splits over a subgroup $C$ commensurable with $H'$ and hence with $H$. More precisely, $G$ is either a non-trivial amalgamated free product $G\cong A*_C B $ or an HNN extension $A*_C$ such that $C$ and $H$ are commensurable. 

Now consider the action of $G$ on the Bass-Serre tree for the splitting. Since $H$ is commensurable with an edge stabiliser, and $G$ acts with no edge inversions,  the subgroup $H$ fixes a vertex $v$ so up to conjugation (and switching the roles of $A, B$ if necessary in the amalgamated free product case) we may assume that $H$ lies in the vertex stabiliser $A$. Having taken a conjugate of the picture to arrange for $H<A$ it is still the case that $H$ is commensurable with an edge stabiliser $C^g$ for some $g$, but, \emph{a priori}  it is not clear that $H$ is commensurable with $C$ itself. Suppose that $C^g$ fixes an edge $e$ so that $H\cap C^g$ fixes both $e$ and $v$. It then fixes the first edge $e'$ on the geodesic from $v$ to $e$. So the stabiliser of this edge is commensurable with $H$. Up to conjugation within $A$ this stabiliser is $C$, so we may express $G$ as an amalgamated free product $G\cong A*_C B $ or as an HNN extension $A*_C$ with $H<A$ and so that $H$ and $C$ are commensurable.

By \cite[Theorem V.8.2]{dicksdunwoody} we obtain a $PD^n$ pair $(A, \Omega)$, where $\Omega$ is the set of cosets of $C$ in $A$. There are two types of $PD^n$ pairs, the $I$-bundle type where $\Omega $ has two elements and the general type where $\Omega$ is infinite. 

As remarked in \cite[Section 2.1]{KrophollerRoller}, if $(A,\Omega)$ is of general type, then the subgroup $C$ is self-commensurating in $A$. If $(A,\Omega)$ is the I-bundle type and $A$ acts trivially on $\Omega$ then $A=C$. So $C$ is self-commensurating in $A$ in both these cases. As commensurable subgroups have the same commensurators and $H<A$ we conclude in both these cases that $H$ is a finite index subgroup of $C$ as required. 

It remains to consider the situation when $(A,\Omega)$ is of twisted I-bundle type i.e. $A$ acts non-trivially on $\Omega$. Here it is no longer evident that $C$ is self-commensurating in $A$ and the proof that $H$ is a subgroup of $C$ is slightly different. Recall that $\Omega$ is a 2-element set and $A$ acts nontrivially on $\Omega$ so that the stabiliser of any point in $\Omega$ is isomorphic to $C$. This implies that $C$ has index $2$ in $A$ and by \cite[Proposition VIII.10.2]{brown}, $A$ is a $PD^{n-1}$ group. We argue that in this case $A$ is non-orientable, using the following observation. 
\medskip

\noindent \textbf{Observation} If $(\Gamma,\Omega)$ is an orientable $PD^n$ pair of $I$-bundle type in which $\Gamma$ is an orientable $PD^{n-1}$ group then $\Gamma$ acts trivially on $\Omega$.

The proof given here of the observation above is due to Jonathan Cornick and Peter Kropholler, and we are grateful to them for allowing us to include it. 
Consider the long exact sequence of cohomology for the pair with coefficients in the integral group ring $\Z \Gamma$. Cohomology groups with these coefficients inherit an action of $\Gamma$. We obtain the following short exact sequence
\[
0\to \cohom{n-1}{\Gamma}{\Z \Gamma}\to\ext{n-1}{\Z \Gamma}{\Z\Omega}{\Z \Gamma}\to\cohom {n}{(\Gamma,\Omega)}{\Z \Gamma}\to0. 
\]
The orientability of the pair $(\Gamma,\Omega)$ implies $\cohom n{(\Gamma,\Omega)}{\Z \Gamma}$ is isomorphic to the trivial module $\Z$. The orientability of $\Gamma$ implies that $\cohom{n-1}\Gamma{\Z \Gamma}$ is isomorphic to the trivial module as well. From this it follows that the action of $\Gamma$ on the middle group $\ext{n-1}{\Z \Gamma}{\Z\Omega}{\Z \Gamma}$ is either trivial or of infinite order.

Let $H$ be the stabiliser of one of the points of $\Omega$. Let $K$ be the unique maximal orientable $PD^{n-1}$ subgroup in $H$. The action of $K$ on $\ext{n-1}{\Z \Gamma}{\Z\Omega}{\Z \Gamma}$ is trivial and $K$ has index at most $4$ in $\Gamma$. From this it follows that $\Gamma$ acts trivially on $\ext{n-1}{\Z \Gamma}{\Z\Omega}{\Z \Gamma}$. In particular, $\Gamma$ acts trivially on $\Omega$. This establishes the observation. 

Returning  again to the proof of Theorem \ref{PD}, we apply the observation with $\Gamma=A$. As the pair $(A, \Omega)$ is of twisted $I$-bundle type, $A$ must be non-orientable, so it has a non-trivial dualising module $D$. Proposition VIII.10.2 of \cite{brown} says that any $PD^{n-1}$ subgroup of $A$ inherits its dualising module by restriction from the dualising module of $A$, so $A$ has a unique maximal orientable $PD^{n-1}$ subgroup, $A^+$. Since $A$ is not orientable $A^+$ is a proper subgroup and since $C$ is orientable $C<A^+$. Finally since the index $[A\mathop:C]=2$ we conclude that $A^+=C$. Since $H$ is also orientable, $H<C$ as required.

\end{proof} 

\section{Prerequisites from Surgery Theory} \label{prelim2}

\emph{In this section (and in particular in  Lemmas \ref{belowmid} and \ref{middim} below), for a smooth (respectively, PL or topological)  manifold, \emph{submanifold} means a smooth (respectively, PL locally flat or topological locally flat) submanifold.}

Let $M, N$ be manifolds as in the statement of Theorem \ref{main} and $j:N\rightarrow M$  a $\pi_1$-injective map. Then $\pi_1(N), \pi_1(M)$ satisfy the hypotheses of Theorem \ref{dualitytheorem} providing a splitting of $\pi_1(M)$ as either an amalgamated free product $\pi_1(M)=A\mathop{*}_C B$ or as an HNN extension $A\mathop{*}_C$ where $\pi_1(N)<C$ is a subgroup of finite index. In \cite{cappell} Cappell provided tools to geometrise such a splitting. In particular he proved that if $C$ is square root closed in $\pi_1(M)$ then there is a closed aspherical embedded submanifold $i:N'\hookrightarrow M$ such that $i$ induces the inclusion of $C$ in $\pi_1(M)$ and, by Van Kampen's theorem, induces the splitting of $\pi_1(M)$. Asphericity then yields the homotopy commutative Figure \ref{diagram2} of the introduction.

The idea of Cappell's proof is to use surgery techniques to build a  cobordism from $M$ to a manifold $M'$ which does split in the required way. This provides the homotopy equivalences $h,h'$ required by Theorem \ref{main} and Cappell's square root closed hypothesis on $C$ can be used to promote the homotopy equivalences to a homeomorphism $M\cong M'$, bypassing the need for $\pi_1(M)$ to satisfy the Borel conjecture. Since we prefer not to invoke either the Borel conjecture or the square root closed hypothesis in our statement of Theorem \ref{main} it is necessary to unpack the proof of Cappell's splitting theorem.

As usual, surgery below the middle dimension requires no additional hypotheses, as is captured by the following lemma. We  have amended the notation to fit our situation:

\begin{lemma}\cite[Lemma I.1]{cappell} \label{belowmid}
Let $Y$ be an $(n+1)$ dimensional Poincar\'e complex and $X$ a codimension-$1$  sub-Poincar\'e complex with trivial normal bundle in $Y$ and with $\pi_1(X) \rightarrow \pi_1(Y)$ injective. Let $M$ be an $(n+1)$ dimensional closed manifold with $f:M \rightarrow Y$ a homotopy equivalence, $n \geq 5$. Assume we are given  $m < (n-1)/2$; then $f$ is homotopic to a map, which we continue to call $f$, which is transverse regular to $X$ (whence $N'=f^{-1}(X)$ is a codimension-1 submanifold of $M$) and such that  the restriction $f|_{N'}:N' \rightarrow X$ induces isomorphisms $\pi_i(N') \rightarrow \pi_i(X)$, $i \leq m$.
\end{lemma}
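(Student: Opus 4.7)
The approach will be the standard surgery-below-the-middle-dimension argument, realised through successive homotopies of $f$. One begins with transversality: since $X \subset Y$ has trivial normal bundle $X \times (-1,1)$, a small perturbation makes $f$ transverse regular to $X$, and $N' := f^{-1}(X)$ is a closed codimension-$1$ submanifold of $M$ whose normal bundle, being the pullback of that of $X$, is also trivial. It remains to arrange, by further homotopies of $f$ through transverse maps, that $f|_{N'}$ induces isomorphisms on $\pi_i$ for all $i \leq m$.

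I would proceed by induction on $i$. The base case $i=0$ amounts to performing finitely many $0$-dimensional surgeries on $N'$ to match components with those of $X$. For the inductive step, assume $f|_{N'}$ already induces isomorphisms on $\pi_j$ for $j < i$, where $i \leq m$. Passing to universal covers, using that $f$ is a homotopy equivalence and that $X \hookrightarrow Y$ is $\pi_1$-injective, one picks a finite set of $\Z\pi_1(X)$-module generators $\alpha_1, \dots, \alpha_k$ for $\ker(\pi_i(N') \to \pi_i(X))$, and it suffices to kill each $\alpha = \alpha_s$ by a surgery on $N'$ realised as a homotopy of $f$.

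The key step for a single generator $\alpha$ runs as follows. Since $f\alpha$ bounds a disk in $X \subset Y$ and $f$ is a homotopy equivalence, $\alpha$ is nullhomotopic in $M$. The hypothesis $m < (n-1)/2$ provides the dimension slack required for general position to represent $\alpha$ by an embedded $S^i \subset N'$ and to promote the nullhomotopy to an embedded disk $D^{i+1} \subset M$. Using the trivial collar $N' \times (-1,1) \subset M$, push the interior of $D^{i+1}$ off $N'$ to one side so that $D^{i+1} \cap N' = \partial D^{i+1} = S^i$. Below the middle dimension the framing of $S^i$ inherited from $N'$ extends over $D^{i+1}$, and one performs surgery on $N'$ along this framed disk. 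Since the surgery is supported in a regular neighbourhood of the trace sitting inside the collar together with a small piece on one side, it is realised by a homotopy of $f$ that preserves transversality to $X$ and has new preimage $N''$. The class $\alpha$ dies in $\pi_i(N'')$, and because the surgery attaches handles of index $i+1 \leq m+1$, the previously achieved isomorphisms in dimensions $< i$ are not disturbed.

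The main obstacle will be the combined bookkeeping of framings and of homotopy groups: each disk must be framed compatibly with the trivialization of $\nu(N' \subset M)$ so that the surgery genuinely lives in the collar and converts into an honest homotopy of $f$, and one must check that transversality and the already achieved low-dimensional isomorphisms survive each step. All of this is routine below the middle dimension, and the hypothesis $m < (n-1)/2$ is precisely the slack required for general position, framing extension and surgery to behave obstruction-freely. Iterating over the finite generating set at each level, and then over $i = 0, 1, \dots, m$, completes the induction.
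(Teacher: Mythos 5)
The paper does not prove this lemma at all --- it is quoted verbatim from Cappell (Lemma I.1 of \cite{cappell}) --- so the comparison is with Cappell's standard surgery-below-the-middle-dimension argument, which your sketch broadly follows: transversality, then killing $\ker(\pi_i(N')\to\pi_i(X))$ by framed surgeries on $N'$ realised as homotopies of $f$ supported near the collar. Most of that is indeed routine, but the one step you dispose of in a single clause is precisely the heart of the lemma, and as written it fails. From $\alpha\in\ker(\pi_i(N')\to\pi_i(X))$ you get a nullhomotopy of $\alpha$ in $M$, and general position does give an embedded disk $D^{i+1}\subset M$ since $2(i+1)<n+1$; but you cannot then ``push the interior of $D^{i+1}$ off $N'$ to one side'' using the collar. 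The submanifold $N'$ has codimension one in $M$, so a generic $(i+1)$-disk meets it in an $i$-dimensional set ($i\geq 0$); transversality leaves closed $i$-manifolds of intersection in the interior of $D$ which no general-position or collar argument removes, and piping techniques do not apply in codimension one. An arbitrary nullhomotopy in $M$ simply need not live on one side of $N'$.

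The missing idea is to produce disks that bound \emph{on one side from the start}. One cuts $M$ along $N'$ and compares with $Y$ cut along $X$ (here the triviality of the normal bundle, the $\pi_1$-injectivity of $X$ in $Y$, and the fact that $f$ is a homotopy equivalence all enter), and shows via a Mayer--Vietoris/van Kampen argument in the relevant covers that the surgery kernel in dimension $i$ is generated by classes coming from the relative groups $\pi_{i+1}(M_A,N')$ (resp.\ $\pi_{i+1}(M_B,N')$, or the two sides of the cut-open manifold in the HNN case), where $M_A,M_B$ are the closures of the components of $M$ cut along $N'$. Such a relative class is represented by an embedded framed disk $(D^{i+1},S^i)\subset(M_A,N')$ by general position in the $(n+1)$-dimensional piece --- this is exactly where the hypothesis $m<(n-1)/2$, i.e.\ $2(i+1)<n+1$, is used --- and the handle exchange across this disk is the surgery on $N'$ realised by a homotopy of $f$; it also yields the surjectivity of $\pi_i(N')\to\pi_i(X)$, which killing kernels alone does not obviously give. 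Without this one-sided analysis your induction step does not go through, so you should either supply it or, as the paper does, defer to Cappell's Lemma I.1.
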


Lemma \ref{middim} describes the obstruction to carrying out surgery in the middle dimension which by Lemma \ref{aspherical} is all that is then required. It is carried by the surgery kernels $K_i(N')$ defined in \cite[Section I.4]{cappell}, and the projective class group $\widetilde{K_0}(C)$ appearing in Lemma \ref{middim} below. As recorded in the proof of \cite[Lemma I.2]{cappell}, if $\pi_i(f^{-1}(X)) \rightarrow \pi_i(X)$ is an isomorphism then $K_i(N')=0$. 

\begin{lemma}[Lemma II.1 from \cite{cappell}] \label{middim} 
Let $n=2k$, $M$ be a closed manifold and $Y$ a Poincar\'e complex of dimension $(n+1)$. Assume we are given $X$ a sub-Poincar\'e complex) of dimension $n$ of $Y$ with trivial normal bundle and with $C=\pi_1(X)< \pi_1(Y)$. Assume further that $f:M \rightarrow Y$ is a homotopy equivalence transverse regular to $X$ with, writing $N'=f^{-1}(X)$, $N'$ connected and $\pi_1(N') \rightarrow \pi_1(X)$ an isomorphism and $K_i(N')=0$ , $i < k$. Then letting $K_k(N')=P\oplus Q$ denote the decomposition of $\mathbb{Z}C$ modules defined in \cite{cappell} I.4, 
\begin{enumerate} 
\item $K_k(N')$ is a stably free $\mathbb{Z}C$ module and $[P]=-[Q]$. Moreover,\\ in Case I, $[P]\in \ker{(\widetilde{K_0}(C) \rightarrow \tilde{K}_0(G_1) \oplus \tilde{K}_0(G_2))}$, and \\ in Case II, $[P] \in \ker{(\widetilde{K_0}(C) \rightarrow \widetilde{K_0}(J))}$.
\item Any finite set of elements of $P$ (respectively, $Q$) can be represented by embedded disjoint framed spheres in $N'$ for $k >2$. The intersection pairing of $K_k(N')$ is trivial when restricted to $P$ (resp; $Q$) and $Q \cong P^*$. Thus, $[P]=-[P^*]$.
\item\label{vanishingP} If $[P]=0$, $f$ is homotopic to a map $f'$ with $N'= f'^{-1}(X)$ $\rightarrow X$ $k$-connected and so that, abusing notation  by writing $K_k(N')=P \oplus Q$ for the decomposition of \cite{cappell} I.4, $P$ and $Q$ are free $\mathbb{Z}C$-modules. 
\end{enumerate} 
\end{lemma}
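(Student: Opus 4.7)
The plan is to cut $M$ along $N'$ and exploit a Mayer--Vietoris comparison with $Y$. In Case I one gets $M=M_1\cup_{N'}M_2$ with $f|_{M_i}\colon M_i\to Y_i$, while in Case II one gets a cobordism $M_1$ whose two boundary components each map to $X$. Comparing the Mayer--Vietoris sequences of these decompositions of $M$ and $Y$ through $f$, and using that $f$ is a homotopy equivalence while $\pi_1(N')\to\pi_1(X)$ is an isomorphism with all lower surgery kernels vanishing, one obtains a six-term exact sequence of surgery kernels that collapses in the middle dimension. From this I would define $P=\ker(K_k(N')\to K_k(M_1))$ and $Q=\ker(K_k(N')\to K_k(M_2))$ (with the obvious variant in the HNN case using the two boundary inclusions) and read off the decomposition $K_k(N')=P\oplus Q$.

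Next I would establish the algebraic statements of (1) and the duality half of (2) together. Poincar\'e duality on the closed $2k$-manifold $N'$ equips $K_k(N')$ with a non-degenerate $(-1)^k$-hermitian intersection pairing over $\Z C$; the decomposition $P\oplus Q$ is a Lagrangian-type splitting because elements of $P$ bound in $M_1$ while elements of $Q$ bound in $M_2$, so the pairing restricted to $P$ (respectively to $Q$) vanishes by a Whitney move. Non-degeneracy then forces $Q\cong P^{\ast}$, yielding $[Q]=-[P]$ in $\widetilde{K_0}(\Z C)$. Stable freeness of $K_k(N')$ follows from its being the unique surviving obstruction kernel of the middle-dimensional surgery problem. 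The statement that $[P]$ lies in $\ker(\widetilde{K_0}(C)\to \widetilde{K_0}(G_1)\oplus \widetilde{K_0}(G_2))$ (resp.\ $\ker(\widetilde{K_0}(C)\to\widetilde{K_0}(J))$) reflects the fact that after extending scalars to $\Z G_i$ the class $[P]$ represents the vanishing surgery obstruction for the homotopy equivalence $f|_{M_i}$, so $P$ becomes stably free on that side.

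For the geometric assertions in (2), given finitely many classes in $P$ I would apply general position and the equivariant Whitney trick, valid once $k>2$, to realise them by pairwise disjoint, smoothly (resp.\ locally flat) embedded $k$-spheres in $N'$. Framings are obtained because $N'$ inherits a trivial normal bundle from $X\subset Y$, giving $TM|_{N'}\cong TN'\oplus\epsilon^{1}$; each representing sphere bounds a disc in $M_1$ and therefore acquires a compatible stable framing. The vanishing of the intersection form on $P$ then becomes the geometric shadow of its algebraic definition: any two spheres representing classes in $P$ bound in $M_1$, so their algebraic intersection in $N'$ is killed by pushing across a Whitney disc on the $M_1$ side.

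Finally, for (3), assuming $[P]=0$, one stabilises $P$ (and dually $Q$) by adding hyperbolic summands so that $P$ becomes free; represent a basis of $P$ by disjoint framed $k$-spheres using (2) and perform surgery on them. A second application of the Mayer--Vietoris computation shows that the effect is to kill $P$, leaving the new $f'^{-1}(X)\to X$ $k$-connected with surgery kernel concentrated in dimension $k$ and still splitting as a sum of free $\Z C$-summands. I expect the main obstacle to be the equivariant bookkeeping in (1) and (2): verifying the Lagrangian property of $P\oplus Q$ precisely over $\Z C$, tracking how $P$ and $Q$ transform under the involution given by the orientation character, and pinning down that $[P]$ lies in the specific kernel of the transfer map on $\widetilde{K_0}$ indicated in (1). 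This is the point at which the unitary-nilpotent $K$-theoretic machinery of Cappell becomes essential.
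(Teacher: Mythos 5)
First, a point of comparison: the paper does not prove this statement at all. It is quoted (with notation adapted) as Lemma II.1 of Cappell's splitting paper \cite{cappell} and used as a black box, so there is no in-paper argument to measure your sketch against; what you have written is an attempted reconstruction of Cappell's own proof. As such, your outline of parts (1) and (2) is broadly in the right spirit (a Lagrangian-type splitting of the middle-dimensional kernel, duality giving $Q\cong P^*$ and $[P]=-[Q]$, the Whitney trick for $k>2$), but two details are off. The decomposition $K_k(N')=P\oplus Q$ of I.4 is taken in the cover of $M$ corresponding to $C$ (the pieces the paper later calls $M_l$ and $M_r$), not in the compact pieces $M_1,M_2$ of $M$ cut along $N'$: with your definition $K_k(M_i)$ is a $\mathbb{Z}G_i$-module, the $\mathbb{Z}C$-module structure of $P$ and $Q$ is not visible, and the directness of the sum (which is where the free-product/HNN structure of $\pi_1(Y)$ really enters) is not addressed by ``a six-term sequence that collapses.'' Also, isotropy of $P$ is not a Whitney-move statement: it is the Poincar\'e--Lefschetz duality argument that classes of $H_k$ of the boundary which die in a $(2k+1)$-dimensional side pair trivially, since $\lambda_{N'}(\partial u,y)$ is computed by pairing $u$ against the image of $y$ in the side, which vanishes for $y\in P$.

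The genuine gap is in your treatment of (3). You propose to represent a basis of $P$ by framed spheres and ``perform surgery on them,'' claiming the effect is to kill $P$ while only homotoping $f$. That is both stronger than the statement and not achievable by the stated means. Conclusion (3) does not say $P$ is killed; it says only that after a homotopy of $f$ (with $M$ fixed) the restriction $f'|_{N'}\colon N'\to X$ is $k$-connected and the modules $P$ and $Q$ are honestly free rather than stably free. Killing classes of $P$ requires ambient surgery on $N'\subset M$, which changes the manifold: this is precisely Cappell's nilpotent normal cobordism construction, which the present paper carries out after invoking this lemma, producing a cobordism from $M$ to a new manifold $M'$ and citing Cappell's Lemma II.5 for the homotopy equivalence $M'\to Y$. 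If $P$ could be killed by a mere homotopy of $f$, the splitting theorem would follow with no further hypotheses and the ${\rm UNil}$/square-root-closed issues at the heart of Cappell's work would be vacuous. The correct mechanism for (3) is stabilisation: $[P]=0$ gives $P\oplus(\mathbb{Z}C)^m$ free, and one homotopes $f$ so that the preimage changes by connected sum with trivially embedded copies of $S^k\times S^k$, adding hyperbolic summands that enlarge $P$ and $Q$ by free summands and render both free, while keeping the restriction $k$-connected.
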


Applying Lemma \ref{belowmid} up to dimension $n/2-1$ ensures the vanishing of the surgery kernels $K_i(N')$ while hypothesis \ref{K_0} of Theorem \ref{main} ensures the vanishing of $\widetilde{K_0}(C)$  so that $[P]=0$ as required by condition \ref{vanishingP} of Lemma \ref{middim}.

Cappell's lemmas along with his Nilpotent Normal Cobordism Construction allow us to realise the splitting obtained in Theorem \ref{PD} by an embedded submanifold $N'$ in a homotopic manifold $M'$ which is aspherical up to the middle dimension. Given that we are working with Poincar\'e duality groups the following  elementary lemma from homotopy theory guarantees that this is sufficient to ensure that $N'$ is aspherical. 

\begin{lemma} \label{aspherical}
Let $X^n$ be a closed, orientable manifold and let $k$ be the largest integer less than or equal to $\frac{n}{2}$. Suppose the universal cover $\tilde X$ of $X$ is $k$-connected and that $G=\pi_1(X)$ is a duality group of dimension $n$. Then $\pi_i(X)=\{0\}$ for all $i\geq 2$.
\end{lemma}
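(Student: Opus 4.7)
The plan is to show that $\tilde X$ is contractible, which — since $\tilde X$ is already simply connected — amounts by Hurewicz and Whitehead to showing that $\tilde X$ is integrally acyclic. The hypothesis gives $H_i(\tilde X; \Z) = 0$ for $1 \leq i \leq k$; what remains is to kill homology in degrees $i \geq k+1$. I will do this by combining Poincar\'e duality on the orientable $n$-manifold $\tilde X$ with the cohomological vanishing $H^p(G; \Z G) = 0$ for $p \neq n$ supplied by Bieri's characterisation of duality groups (note that for $n \geq 1$ this forces $G$ to be infinite, so $\tilde X$ is noncompact, but in any case the argument below uses only Poincar\'e duality for oriented $n$-manifolds).

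First, I would invoke the Serre (equivalently Cartan--Leray) spectral sequence of the classifying map $X \to BG$, whose homotopy fibre is $\tilde X$, with coefficients in the $G$-module $\Z G$:
\[
E_2^{p,q} = H^p(G; H^q(\tilde X; \Z G)) \Longrightarrow H^{p+q}(X; \Z G).
\]
Since $\tilde X$ is simply connected and $k$-connected, the universal coefficient theorem gives $H^0(\tilde X; \Z G) = \Z G$ with its natural $G$-module structure and $H^q(\tilde X; \Z G) = 0$ for $1 \leq q \leq k$. Combining this with Bieri's vanishing (and using $k < n$), every entry $E_2^{p,q}$ with $p+q \leq k$ is zero, and so $H^j(X; \Z G) = 0$ for every $j \leq k$.

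Next, since $X$ is a finite CW complex, cohomology of $X$ with local coefficients in $\Z G$ is canonically the compactly supported cohomology of the cover: $H^j(X; \Z G) \cong H^j_c(\tilde X; \Z)$. Poincar\'e duality for the oriented $n$-manifold $\tilde X$ then re-expresses the above as
\[
H_i(\tilde X; \Z) \cong H^{n-i}_c(\tilde X; \Z) = 0 \qquad \text{for every } i \geq n-k.
\]
Because $k = \lfloor n/2 \rfloor$ satisfies $n - k \leq k + 1$, the two vanishing ranges $1 \leq i \leq k$ and $i \geq n - k$ jointly cover every $i \geq 1$, so $\tilde X$ is acyclic, hence contractible, and $\pi_i(X) = \pi_i(\tilde X) = 0$ for all $i \geq 2$.

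I expect the main technical subtlety to be the spectral sequence bookkeeping in the first step, specifically verifying that no long incoming differential spoils the vanishing of $E_\infty^{p,0}$ for $p \leq k$. The escape is that rows $1, \ldots, k$ vanish identically, so any $d_r$ landing in bidegree $(p,0)$ with $p \leq k$ must originate on a row of height at least $k+1$, which forces $p \geq k+2$, a contradiction; the relevant $E_\infty$ entries therefore coincide with the already-vanishing $E_2$ entries, and the argument closes.
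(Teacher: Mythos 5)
Your proof is correct, but it takes a route organised differently from the paper's, even though the underlying engine (Poincar\'e duality for the open manifold $\tilde X$, the identification of compactly supported cohomology of $\tilde X$ with $\Z G$-coefficient cohomology, and the vanishing $H^{j}(G;\Z G)=0$ for $j\neq n$) is the same. The paper argues by contradiction: it takes the smallest $p$ with $\pi_p(X)\neq 0$ (so $p\geq k+1$), uses Hurewicz and duality to get $\pi_p(X)\cong H_p(\tilde X)\cong H^{n-p}_c(\tilde X)$, and then spends most of its effort identifying $H^{n-p}_c(\tilde X)$ with $H^{n-p}(G;\Z G)$ by hand, exploiting the fact that the singular chain complex of the $(p-1)$-connected space $\tilde X$ is a partial free resolution of $\Z$ in degrees below $p$ and that $\hom_G(C_i(\tilde X),\Z G)$ is the compactly supported cochain complex; the vanishing of $H^{n-p}(G;\Z G)$ with $n-p<n$ then gives the contradiction. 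You instead prove acyclicity of $\tilde X$ in one pass, with no bootstrap on a minimal $p$: the given $k$-connectivity kills $H_i(\tilde X)$ for $i\leq k$, and the complementary range $i\geq n-k$ is killed via $H_i(\tilde X)\cong H^{n-i}_c(\tilde X)\cong H^{n-i}(X;\Z G)=0$, where the last vanishing comes from the Cartan--Leray spectral sequence (equivalently, from the $(k+1)$-connectivity of the classifying map $X\to BG$) together with $H^{j}(G;\Z G)=0$ for $j\leq k<n$; since $n-k\leq k+1$, the two ranges cover all $i\geq 1$, and Hurewicz--Whitehead finishes. The trade-off is that you quote two standard facts rather than prove them: the identification $H^*(X;\Z G)\cong H^*_c(\tilde X;\Z)$ for the closed manifold $X$ (precisely the point the paper verifies from scratch at the chain level) and the Cartan--Leray comparison, which the paper's partial-resolution device lets it avoid entirely. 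Both routes are legitimate; yours is cleaner if those inputs are taken as known. One small remark: your closing concern about long differentials hitting bidegree $(p,0)$ is moot --- every $E_2$ entry in total degree $\leq k$ already vanishes (the $q=0$ entries because $H^p(G;\Z G)=0$ for $p\leq k<n$, the rows $1,\dots,k$ because $\tilde X$ is $k$-connected), and all later pages are subquotients of $E_2$, so the abutment vanishes without any analysis of differentials.
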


\begin{proof} Let $X$ be a manifold as in the statement of the Lemma. Suppose that $\pi_i(X)\neq\{0\}$ for some $i\geq 2$ and choose the smallest such $p$. Evidently, $\tilde X$ is $(p-1)$-connected. By the Hurewicz Theorem,  $H_i(\tilde X)=0$ for all $i= 1,\ldots , (p-1)$ and $\pi_p(\tilde X) \cong $ $H_p(\tilde X)$. Observe that $k+1 \leq p\leq n$. Duality for orientable non-compact manifolds implies that $H_p(\tilde X)$ $\cong H_c^{n-p}(\tilde X)$. Here, $H_c^*(\tilde X)$ refers to the cohomology with compact supports for $\tilde X$. We claim that $H_c^{n-p}(\tilde X) \cong H^{n-p}(G, \mathbb{Z}G)$.

Recall that $H_c^i(\tilde X)$ $= \varinjlim_K H^i(\tilde X, \tilde X - K)$, as $K$ varies over compact subsets of $\tilde X$ and hence is the $i$-th homology of the complex $\varinjlim_K $ $\hom(C(\tilde X, \tilde X - K),\mathbb{Z})$. Note that every element of $\varinjlim_K $ $\hom(C(\tilde X, \tilde X - K),\mathbb{Z})$ is represented by a module homomorphism from $ C(\tilde X, \tilde X - K)$ to $\mathbb{Z}$ for some compact $K$. 
 
As $G$ acts freely and properly on the universal cover $\tilde X$ with quotient $X$, we can consider the partial augmented singular chain complex $C_p(\tilde X) \rightarrow C_{p-1}(\tilde X) \rightarrow \ldots \rightarrow C_0(\tilde X) \rightarrow \mathbb{Z} \rightarrow 0$. Since $\tilde X$ is $(p-1)$-connected this is a partial free $G$-resolution of $\mathbb{Z}$ and may be completed to a projective resolution $P$. The action of $G$ is proper and so for $i < p$, the cohomology of the co-complex $\hom_G(C(\tilde X), \mathbb{Z}G)$ is the cohomology of $X$ with local coefficients in $\mathbb{Z}G$. Hence this is the $i$-th cohomology of $G$ with $\mathbb{Z}G$ coefficients. 

We now show that for $i <p$, $\hom_G(C_i(\tilde X), \mathbb{Z}G)$  $\cong \varinjlim_K \hom(C_i(\tilde X, \tilde X - K),\mathbb{Z})$, where $K$ varies over compact subsets of $\tilde X$. Let $F \in \hom_G(C_i(\tilde X), \mathbb{Z}G)$. As a $\mathbb{Z}$-module map, $F: C_i(\tilde X)$ $\rightarrow \mathbb{Z}G$ is induced by the assignment $\sigma \mapsto \sum_g f_g(\sigma).g$, where the sum is finite and $f_g$ is a module homomorphism from $C_i(\tilde X)$ to $\mathbb{Z}$. A straightforward computation shows that $F$ is a $G$-module homomorphism if and only if $f_g(\sigma)=f_1(g^{-1}\sigma)$ for all $g \in G$. Note that $f_1$ is zero outside of a compact subset $K$ of $\tilde X$ and hence we have a map $\hom_G(C_i(\tilde X), \mathbb{Z}G)$  $\rightarrow \varinjlim_K \hom(C_i(\tilde X, \tilde X - K),\mathbb{Z})$ given by $F \mapsto f_1$. One checks that this is an isomorphism with inverse coming from the prescription $f\mapsto (F:\sigma \mapsto \sum_g f(g^{-1}\sigma).g)$ and hence $H_c^{n-p}(\tilde X) \cong H^{n-
p}(G, \mathbb{Z}G)$. 

Finally, putting all the isomorphisms together, $\pi_p(X) \cong H^{n-p}(G,\mathbb{Z}G)$. As $0 \leq n-p \leq k-1$ and $G$ is a duality group of dimension $n$ group, $H^{i}(G,\mathbb{Z}G)=0$ except possibly when $i=n$, so in particular $H^{n-p}(G,\mathbb{Z}G)=0$   which contradicts our hypothesis on $\pi_p(X)$. 
\end{proof}

\section{Proof of Theorem \ref{main}}\label{proof}
\begin{proof} Let $M$ and $N$ be as in the statement of Theorem \ref{main}. Set $G=\pi_1(M)$ and $H=\pi_1(N)$. Then $G$ is an orientable $PD^{n+1}$ group and $H$ is an orientable $PD^n$ subgroup of $G$ and by Theorem \ref{PD}, $G$ splits over a Poincar\'e duality subgroup $C$ containing $H$ as a subgroup of finite index. 

It is well known that a group $G$ splits over a subgroup $H$ (in the sense of our convention) if and only if $G$ acts on an unbounded tree $T$ with a single edge orbit, and so that $H$ is an edge stabiliser. It follows from the unboundedness of the action that vertex and edge stabilisers of the action are all of infinite index in $G$. When the splitting is a non-trivial amalgamated free product $G=A\mathop{*}\limits_C B$, the subgroups $A,B$ and $C$ are all of infinite index in $G$ and so, by Strebel's theorem \cite[Theorem 3.2]{Strebel}, the cohomological dimension of each of $A,B, C$ is at most $n$. Since $A, B$  contain $C$ which has cohomological dimension $n$, all three have cohomological dimension equal to $n$. Similarly, in the case of the HNN decomposition, the subgroups $A, C$ have cohomological dimension $n$. Since $n>2$, the Eilenberg-Ganea theorem \cite{EilenbergGanea}, applies and the cohomological dimensions of $A$, $B$ and $C$ are equal to the geometric dimensions of $A$, $B$ and $C$ 
respectively.

We now carry out the standard mapping cylinder construction of an Eilenberg MacLane space for $G$. When the group $G$ splits as an amalgamated free product $G=A*_C B$ let $X_A$, $X_B$ and $X_C$ denote the $n$-dimensional $K(\pi,1)$ complexes for $A$, $B$ and $C$. Note that these complexes may be taken to be simplicial complexes. Since $G, C$ are both of type FP, we can apply the Mayer Vietoris sequence, to see that  the groups $A$ and $B$ are also of type FP so we may choose the  complexes to be of finite type.

Let $\phi_A: X_C\rightarrow X_A$ and $\phi_B:X_C\rightarrow X_B$ be maps inducing the inclusions $C\hookrightarrow A$ and $C\hookrightarrow B$. We construct the mapping cylinders $M_A, M_B$ of these maps and making the standard identifications yields a $K(\pi,1)$ for the group $G$ which we will denote by $Y$.  In the case when $G$ splits as an HNN extension we carry out a similar process building a $K(\pi, 1)$ from $X_A$ and $X_C\times [-1,1]$ by glueing the two ends $X_C\times \{ \pm 1\}$ to $X_A$ using maps which induce the two inclusions of $C$ into $A$ defining the HNN extension. In this case again we denote the $K(\pi,1)$ by $Y$.

Note that $Y$ is a Poincar\'e complex, and moreover, the Poincar\'e subcomplex $X_C\times \{0\}\subset Y$ cuts the neighbourhood  $X_C\times (-1/2, 1/2)\subset Y$  into two components so that the normal bundle of $X_C$ in $Y$ is trivial (see Introduction of \cite{cappell}). By construction, the composition $ X_C\rightarrow X_C\times\{0\}\hookrightarrow Y$ is $\pi_1$-injective. Applying Lemma \ref{belowmid}, we may replace $f$ (up to homotopy) by a map, which we continue to call $f$, which is transverse regular to $X_C\times\{0\}$  and such that  the restriction of $f$ to $N'=f^{-1}(X_C\times \{0\}) \rightarrow X_C$ induces isomorphisms $\pi_i(N') \rightarrow \pi_i(X_C)$, for all $i\leq (n/2-1)$. In particular, $\pi_i(N')=0$ for all $i = 2,\dots,(n/2-1)$. Note that the transverse regularity of $f$ ensures that the pre-image $N'$ is a codimension-$1$ submanifold of $M$. Our aim is to  further modify the map $f$ so that it remains transverse regular and so that its restriction to $N'$ induces an isomorphism on 
all homotopy groups, thus making $N'$ aspherical. 
 
Recall that $n$ is even so that $n=2k$ with $k \geq 3$. As recorded in the proof of Lemma I.2 of \cite{cappell}, the isomorphisms $\pi_i(N') \rightarrow \pi_i(X_C)$, for all $i\leq (k-1)$ ensure that each of the modules $K_i(N')$ for $i \leq k-1$ vanishes. 

Since $C$ is a torsion free finite extension of $H$, condition \ref{K_0} of Theorem \ref{main} ensures the vanishing of $\widetilde{K_0}(C)$.  Invoking Lemma \ref{middim}, we replace $f$ by a map $f'$ homotopy equivalent to $f$ which is transverse regular and such that $\pi_1(N')\cong C$ where $N'=f'^{-1}(X)$ and the restriction of $f'$ to $N'$ is $k$-connected. In addition, both $P$ and $Q$ in $K_k(N')=P \oplus Q$ are free $C$-modules. This means that one can perform Cappell's Nilpotent Normal Cobordism Construction on $M$, a process we will now describe. 

Note that cutting $M$ along $N'$ we get a decomposition $M=M_A \cup_{N'} M_B$ in Case I and $M=M_A/\{N'_A \cong N' \cong N'_B \}$ in Case II. The covering of $M$ corresponding to the image of $\pi_1(X) \rightarrow \pi_1(Y) \cong \pi_1(M)$ is labelled $\hat{M}$ and the universal covering of $M$ is labelled $\tilde{M}$. The  group  $C=\pi_1(N')$ acts by covering transformations on $\tilde M$, $M_L$ and $M_R$ (where $\tilde M= M_L \cup_{\tilde{N'}} M_R$) with quotients being $\hat{M}$, $M_l$ and $M_r$. Hence, $\hat{M}=M_l \cup_{N'} M_r$. Let $I=[0,1]$ and $I'=[-2,2]$. We select a tubular neighbourhood $N' \times I'$ of $N'$ in $M$ such that when the lift of $N'$ to $\hat{M}$ is extended to a lift of $N' \times I'$, we have $N' \times \{-2 \} \subset M_l$ and $N' \times \{2\} \subset M_r$. 

\begin{figure}[h] 
   \centering
   \includegraphics[width=6in]{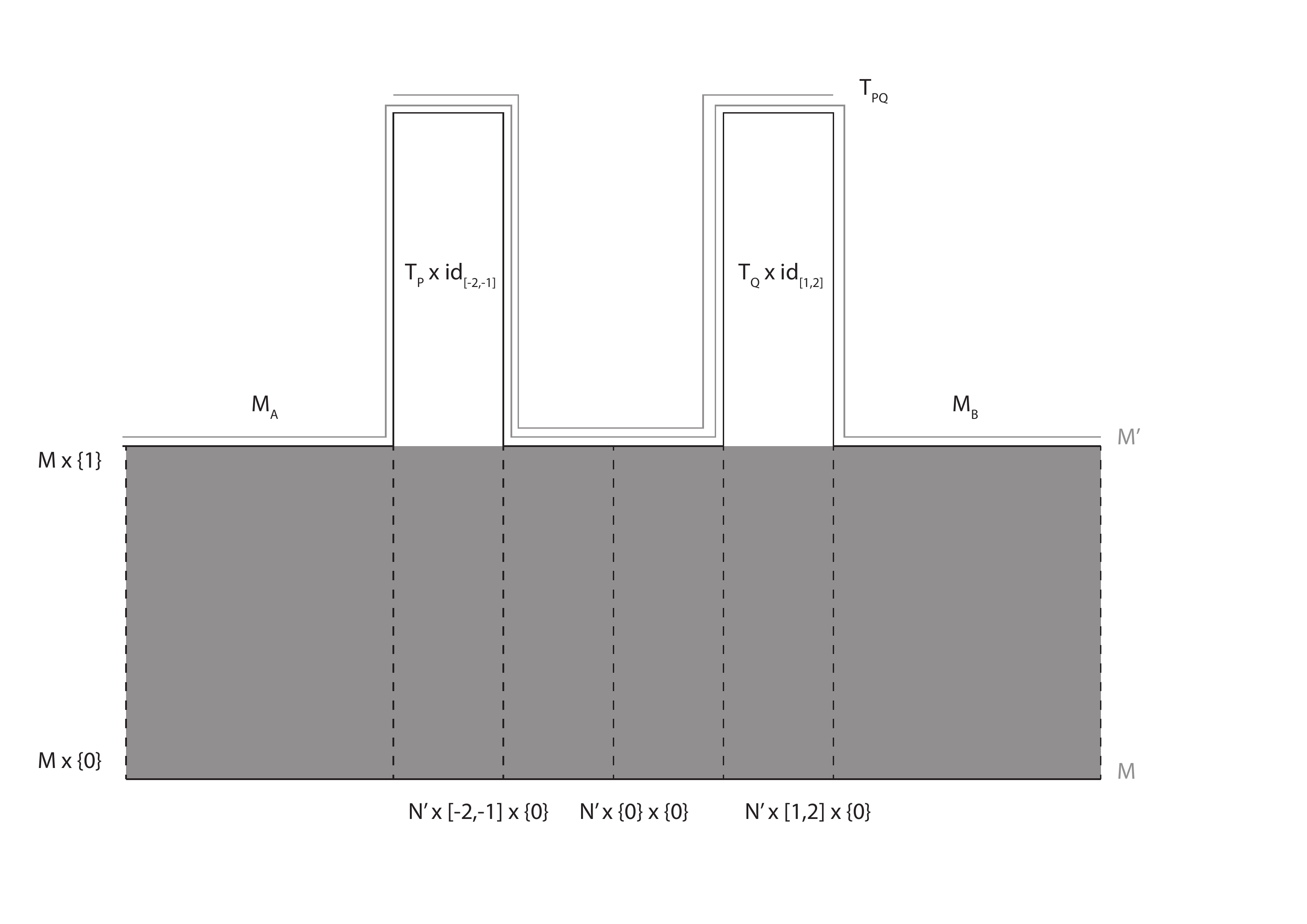} 
   \caption{Extending the trivial cobordism on $M$ via surgery along the spheres $\{X_i\}$ and $\{Y_j\}$.}
   \label{fig:cobordismextension}
\end{figure}

Let $\{x_i\}_{i=1}^{s}$ denote a $\pi_1(N')$-free basis for $P$ and let $\{y_i\}_{i=1}^{s}$ denote the dual basis for $Q$ under the intersection pairing of $K_k(N')$. We choose disjoint framed spheres  $\{X_i\}_{i=1}^{s}$ and $\{Y_i\}_{i=1}^{s}$  representing the bases for $P$ and $Q$ respectively. We can assume given the choice of bases, that for $i \neq j$, $X_i \cap Y_j = \emptyset$ and for any $i$, $X_i$ intersects $Y_i$ in a point. We kill the spheres $X_i$ by $k$-surgery on $N'$ to obtain a new manifold $N'_P$ yielding a cobordism $T_P$ of $N'$ with $N'_P$; similarly killing the spheres $Y_i$ by $k$-surgery we get a cobordism $T_Q$ of $N'$ with a new manifold $N'_Q$. Clearly, $\pi_1(N'_P) \cong C \cong \pi_1(N'_Q)$ and by Lemma \ref{aspherical} both $N'_P, N'_Q$ are aspherical and thus homotopy equivalent to $X_C$. 

Now consider the trivial cobordism $M \times I$. We will extend it to a cobordism from $M$ to $M'$ by applying the cobordism extension lemma, gluing $T_P \times [-2,-1]$ to $M \times \{1\}$ along $N' \times [-2,-1] \times \{1\}$ and glueing $T_Q\times [1,2]$ to $M \times \{1\}$ along $M \times [1,2]$. The boundary of the resulting manifold $T$ has two components, one being $M$ and the other, a new manifold $M'$ given as follows.\\
\noindent $M'= (M_A \cup_{N'} T_P) \cup_{N'_P} T_{PQ} \cup_{N'_Q}(T_Q \cup_{N'} M_B), \textrm{ where }$\\
\noindent $T_{PQ}= (N'_P \times [-2,-1]) \cup_{N'_P} T_P \cup_{N'} (N' \times [-1,1]) \cup_{N'} (T_Q \cup_{N'_Q} N'_Q \times [1,2]).$

Observe that the manifold $T_{PQ}$ sits naturally inside $M'$ and gives a cobordism between $N'_P$ and $N'_Q$. Moreover not only are $N'_P$ and $N'_Q$ homotopy equivalent to $X_C$, the cobordism $T_{PQ}$ constructed by filling the spheres $\alpha_i$ and $\beta_i$ is also homotopy equivalent to $X_C$. The original homotopy equivalence $f: M \rightarrow Y$ extends to a map $F: T \approx M\cup T_P \times [-2,-1]\cup T_Q\times [1,2]\rightarrow Y\times I$ which restricts first to a homotopy equivalence $M' \rightarrow Y$ (\cite[Lemma II.5]{cappell}) and further to the homotopy equivalence $T_{PQ} \rightarrow X_C$.

Hence we obtain a manifold $M'$ which is homotopy equivalent to $M$ and split along $X_C \subset Y$ via the embedded submanifold we continue to call $N'$. Recall also that $H$ is a finite index subgroup of $C$. Hence, there is a finite cover $N_0$ of $N'$ with fundamental group $H$. As $N_0$ and $N$ are aspherical manifolds with isomorphic fundamental groups there exists a homotopy equivalence $h'$ between them. This completes the proof of Theorem \ref{main}.

\end{proof}

\section{An obstruction result: Corollary \ref{firstcorollary}}\label{application} 

Recall the following classical fact: if $M^{4d+1}$ is a smooth manifold such that the first Betti number $b_1(M)$ of $M$ is $0$ and $N^{4d}$ has non-zero signature then there are no immersions of $N$ into $M$. This follows using Hirzebruch's signature theorem: since $f$ is a codimension-1 immersion, $f^*\: H^{4d}(M, \mathbb{Q}) \rightarrow H^{4d}(N, \mathbb{Q})$ maps the Hirzebruch L-class $L_d(M)$ onto $L_d(N)$. It follows from Poincar\'e duality that $H^{4d}(M, \mathbb{Q})$ is isomorphic to the torsion free part of $H_1(M)$. Therefore, the vanishing of $b_1(M)$ forces $L_d(N)$ to be zero. However Hirzebruch's signature theorem says that $L_d(N)$ is equal to the signature of $N$. This applies, for example, to show that there are no codimension-1 immersions of an orientable quaternionic hyperbolic or Cayley hyperbolic manifold  $N^{4d}$ into a smooth orientable aspherical manifold $M^{4d+1}$ with $b_1(M)=0$. 

Appealing instead to the non-vanishing of Pontryagin numbers, we obtain the following generalisation as a corollary to Theorem \ref{main}, which obstructs the existence of $\pi_1$-injective continuous maps rather than immersions. Note that while our hypotheses on $N$ imply that it satisfies the Borel conjecture so that we can take the homotopy equivalence $h'$ to be a homeomorphism, we do not assume in the statement that the target manifold satisfies the Borel conjecture, nor do we assume Cappell's square root closed hypothesis, since in the proof we are applying Theorem \ref{main} in its most general form.

\begin{corollary2}\label{Pontryagin}
Let $M^{4d+1}$ be a closed, orientable, aspherical, smooth manifold  such that $d\geq2$ and the first Betti number $b_1(M)$ is zero. Let $N^{4d}$ be a closed, orientable, aspherical, smooth manifold with at least one non-zero Pontryagin number such that $\pi_1(N)$ is either word hyperbolic or a CAT(0) group and satisfies Kazhdan's property (T). Then there are no $\pi_1$-injective continuous maps $f\: N \rightarrow M$.
\end{corollary2}

\begin{proof}
Carrying out the Cappell surgery arguments in the smooth category, the proof of Theorem \ref{main} shows that there is a 2-sided embedded smooth submanifold $N'$ in a smooth manifold $M'$  homotopy equivalent to $M$ that realises the splitting. Furthermore, since $\pi_1(N)$ satisfies the Borel conjecture, the map $p\circ h':N\rightarrow N'$ is a covering map. Now if $N'$ is separating, write $M_1$ for one of the connected components of the manifold obtained from cutting $M'$ along $N'$. Then $M_1$ is a smooth orientable manifold with boundary $N'$ and hence $N'$ bounds orientably. This means that all Pontryagin numbers for $N'$ must vanish. On the other hand we are assuming that $N$ has at least one non-zero Pontryagin number (for the case when $N$ is quaternionic or Cayley hyperbolic, see \cite{LafontRoy}). Since Pontryagin numbers vary multiplicatively with degree on covering maps the Pontryagin numbers of $N'$ are also non-zero and so $N'$ cannot bound orientably. This means that $N'$ cannot be separating 
and so intersection with $N'$ yields an element of infinite order in the first cohomology of $M'$. By duality and the universal coefficients theorem the first Betti number $b_1(M')$  and hence $b_1(M)$ is non-zero. 
\end{proof}


\begin{thebibliography}{}
\bibitem{BL} Bartels,~A. and L\"uck,~W., The Borel conjecture for hyperbolic and CAT(0) groups. Ann. of Math. (2) 175 (2012), no. 2, 631--689.
\bibitem{bieri} Bieri,~R., Homological dimension of discrete groups, Queen Mary College Mathematics Notes, London (1976). 
\bibitem{brown} Brown,~K., Cohomology of groups. Graduate Texts in Mathematics 87, Springer-Verlag, New York-Berlin (1982).
\bibitem{cappell} Cappell,~S., A Splitting Theorem for Manifolds, Invent. Math. 33, 69--170 (1976). 
\bibitem{Davis} Davis,~M., The Geometry and Topology of Coxeter Groups, London Mathematical Society Monographs, Princeton University Press (2008). 
\bibitem{dicksdunwoody} Dicks,~W.~ and Dunwoody,~M.~J., Groups acting on graphs, CUP (1982). 
\bibitem{dixonmortimer} Dixon,~J.~D. and Mortimer,~B., Permutation Groups, Graduate Texts in Mathematics 163, Springer (1996). 
\bibitem{Dunwoody-Sageev} Dunwoody, M.~J.~and Sageev,~M.~E., JSJ-splittings for finitely presented groups over slender groups. Inventiones Mathematicae, 135, (1), 25--44 (1999). (doi:10.1007/s002220050278).
\bibitem{Dunwoody-Swenson} Dunwoody,~M.~J.~ and Swenson,~E.~L., The algebraic torus theorem. Inventiones Mathematicae, 140, (3), 605--637 (2000). (doi:10.1007/s002220000063).
\bibitem{EilenbergGanea} Eilenberg,~S.~ and Ganea,~T., On the Lusternik-Schnirelmann Category of Abstract Groups, Ann. of Math. 65, 517--518 (1957).
\bibitem{Farb} Farb,~B, The quasi-isometry classification of lattices in semi-simple Lie groups, Mathematical Research Letters 4, 705--717 (1997).
\bibitem{FarrellJones} Farrell,~F.~T.~and Jones,~L.~E., Topological rigidity for compact non-positively curved manifolds, Differential geometry: Riemannian geometry (Los Angeles, CA, 1990), 229--274, Amer. Math. Soc., Providence, RI, (1993). 
\bibitem{KahnMarkovic} Kahn,~J.~and Markovic,~V., Immersing almost geodesic surfaces in a closed hyperbolic three manifold, Ann. of Math. (2) 175 (2012), no. 3, 1127--1190.
\bibitem{Kropholler} Kropholler,~P.~H, A Group Theoretic Proof of the Torus Theorem, Geometric Group Theory Volume 1, Eds. G.A. Niblo and M. Roller, LMS Lecture Notes Series 181, CUP (1993) 138--158. 
\bibitem{KrophollerRoller} Kropholler,~P.~H.~and Roller,~M.~A., Splittings of Poincar\'e duality groups III, J. London Math. Soc. 39, 271--284 (1989).
\bibitem{LafontRoy} Lafont,~J. and Roy,~R., A note on the characteristic classes of non-positively curved manifolds, Expo. Math. 25, 21--35 (2007).
\bibitem{Mok} Mok,~N., Siu,~Y.~T.~and Yeung,~S., Geometric superrigidity. Invent. Math. 113, 57--83 (1983).
\bibitem{N} Niblo,~G.~A, The singularity obstruction for group splittings, Topology Appl. 119, 17--31 (2002).
\bibitem{NibloReeves} Niblo,~G.~A. and Reeves,~L.~D.,  Groups acting on CAT(0) cube complexes, Geom. Topol. 1, 1--8 (1997).
\bibitem{Papakyriokopoulos} Papakyriakopoulos,~C., On Dehn's lemma and asphericity of knots. Ann. of Mathematics 66 (1), 1--26 (1957). doi:10.2307/1970113. JSTOR 1970113
\bibitem{Reid} Reid,~A.,  A non-Haken hyperbolic 3-manifold finitely covered by a surface bundle, Pacific J. Math. 167, 163--182 (1995).
\bibitem{Sageev} Sageev,~M.,  Co-dimension 1 subgroups and splittings of groups, J. Algebra, 189, 377--389 (1997).
\bibitem{Scott} Scott,~G.~P., Ends of pairs of groups, Journal of Pure and Applied Algebra, 11 (1977) 179--198.
\bibitem{Scott-Swarup} Scott,~G.~P.~and Swarup,~G.~A., Regular neighbourhoods and canonical decompositions for groups. As\'erisque, (289):vi+233, (2003).
\bibitem{Strebel} Strebel,~R., A remark on subgroups of infinite index in Poincar\'e duality groups, Comment. Math. Helv. 52, 317--324 (1977).
\bibitem{Waldhausen} Waldhausen,~F., On the determination of some bounded 3-manifolds by their fundamental groups alone, Proc. Internat. Sympos. on Topology and its Applications, Herieg-Novi, Yugoslavia, Aug.~ 25-31,~1968,~Belograd,~1969,~ 331--332.
\end{thebibliography}
\end{document}